\pgfplotsset{compat=1.15}
\newcommand{\CRef}[1]{{\color{blue}\hyperref[#1]{\cref{#1}}}}
\newtheorem{theorem}{Theorem}
\newtheorem*{theorem*}{Theorem}
\newtheorem{lemma}[theorem]{Lemma}
\newtheorem{proposition}[theorem]{Proposition}
\newtheorem*{proposition*}{Proposition}
\theoremstyle{definition}
\newtheorem{definition}[theorem]{Definition}
\newtheorem{remark}[theorem]{Remark}
\newtheorem*{notation*}{Notation}
\numberwithin{equation}{section}
\numberwithin{theorem}{section}
\newcommand{\m}{\mathbb}
\newcommand{\Mathbb}[1]{\mathbb{#1}}
\newcommand{\E}{\Mathbb{E}}
\newcommand{\N}{\Mathbb{N}}
\renewcommand{\P}{\Mathbb{P}}
\newcommand{\R}{\Mathbb{R}}
\renewcommand{\phi}{\varphi}
\newcommand{\eps}{\varepsilon}
\newcommand{\ttu}[1]{\underset{#1}{\longrightarrow}}
\newcommand{\ttn}{\ttu{n \to +\infty}}
\newcommand{\Lim}{\lim\limits}
\newcommand{\Sup}{\sup\limits}
\newcommand{\Intn}[2]{\llbracket #1 ; #2 \rrbracket}
\newcommand{\Sum}[2]{\sum\limits_{#1}^{#2}}
\newcommand{\pinf}{{+\infty}}
\newcommand{\f}[2]{\frac{#1}{#2}}
\newcommand{\Set}[2]{\left\{\,#1 \;\middle|\; #2 \,\right\}}
\newcommand{\pa}[1]{\left( #1 \right)}
\newcommand{\pac}[1]{\left[ #1 \right]}
\newcommand{\abs}[1]{\left| #1 \right|}
\newcommand{\floor}[1]{\left\lfloor #1 \right\rfloor}
\newcommand{\ceil}[1]{\left\lceil #1 \right\rceil}
\newcommand{\intoo}[2]{\,\left]#1,#2\right[}
\newcommand{\intof}[2]{\,\left]#1,#2\right]}
\newcommand{\qandq}{\quad \text{ and } \quad}
\newcommand{\norm}[1]{\abs{\abs{#1}}}
\newcommand{\fun}[4]{\begin{cases}#1 &\to #2\\#3&\mapsto#4\end{cases}}
\newcommand{\then}{\Longrightarrow}
\newcommand{\StepInit}{\newcounter{Step}}
\newcommand{\Step}[1]{\stepcounter{Step}\textbf{Step \theStep} (#1).}
\author{Thomas Ragel\thanks{CEREMADE, Paris Dauphine University, PSL Research Institute, France.} \ and Bruno Ziliotto\thanks{Toulouse School of Economics, Université Toulouse Capitole, Institut Mathématique de Toulouse, CNRS, France.}}
\title{Constant Payoff Property in Zero-Sum Stochastic Games with a Finite Horizon}
\date{}
\begin{document}
\maketitle
\bibliographystyle{plain}
\begin{abstract}
This paper examines finite zero-sum stochastic games and demonstrates that when the game's duration is sufficiently long, there exists a pair of approximately optimal strategies such that the expected average payoff after any fraction of the game's duration is close to the value. This property, known as the \textit{constant payoff property}, was previously established only for absorbing games and discounted stochastic games.
\end{abstract}
\section*{Introduction}
Zero-sum stochastic games \cite{SH53} model dynamic interactions between two adversarial players. At each stage $m \geq 1$, Player 1 selects an action $i_m$ and Player 2 selects an action $j_m$ simultaneously, possibly using randomization. Player 1 receives a stage payoff $g_m:=g(\omega_m,i_m,j_m)$, and Player 2 receives $-g_m$, where $\omega_m$ is a random variable called the \textit{state}. The distribution of $\omega_m$ depends only on $(\omega_{m-1},i_{m-1},j_{m-1})$. Moreover, at the end of each stage $m$, players are informed of $i_m$, $j_m$ and $\omega_{m+1}$. 

In the \textit{$n$-stage game}, the total payoff is the expectation of the Cesaro mean $\frac{1}{n} \sum_{m=1}^n g_m$, while in the \textit{$\lambda$-discounted game}, the total payoff is the expectation of the Abel mean $\sum_{m \geq 1} \lambda (1-\lambda)^{m-1} g_m$. In both games, Player 1 aims to maximize the total payoff while Player 2 seeks to minimize it. Unless explicitly stated otherwise, the state space and action sets are assumed to be finite (\textit{finite stochastic game}). The value of the $n$-stage game is denoted by $v_n$, and the value of the $\lambda$-discounted game is denoted by $v_\lambda$. Significant effort has been dedicated to studying the asymptotic behavior of stochastic games as $n$ tends to infinity and $\lambda$ goes to 0. 
A seminal result by Bewley and Kohlberg \cite{BK76} shows that $(v_\lambda)$ and $(v_n)$ converge to the same limit $v^*$, called the \textit{limit value}. Recently, Attia and Oliu-Barton \cite{AOB19} provided a characterization of $v^*$. However, the limit value may not exist when the state is unobserved or when the state space is infinite \cite{Z15}. Similarly, it may not exist if one of the action sets is infinite \cite{vigeral13,SV15b,Z13}. 

In this paper, we study the \textit{constant payoff property}, which originated in the paper \cite{SVV10}. That work shows that, in the context of single decision-maker problems, the uniform convergence\footnote{Pointwise convergence alone is not sufficient for this statement to hold, as illustrated by an example in \cite{LS92}.} of $(v_n)$ implies that, for all $t \in \intoo{0}{1}$ and $n$ sufficiently large, when the player uses $\varepsilon$-optimal strategies in the $n$-stage game, the expectation of $\frac{1}{n} \sum_{m=1}^{\ceil{tn}} g_m$ is approximately equal to $t v^*$, up to an $O(\varepsilon)$ term: the average payoff is approximately \textit{constant}. In particular, it can not happen that the player gets far more than the value during the first $n/2$ stages, and then far less during the last $n/2$ stages. A similar property holds for discounted evaluations: for any $M \in \N$ and sufficiently small $\lambda$, if the player uses $\varepsilon$-optimal strategies, the expectation of the cumulated payoff $\sum_{m=1}^M \lambda(1-\lambda)^{m-1}g_m$ is close to $\left[\sum_{m=1}^M \lambda(1-\lambda)^{m-1}\right] v^*$, up to an $O(\varepsilon)$.
However, \cite{SVV10} also provides an example showing that these results
do not generalize to the two-player setting for any pair of $\eps$-optimal strategies. Nevertheless, they propose the following conjecture: in any two-player stochastic game where $(v_n)$ converges uniformly (possibly with infinite state space or action sets), for each $\varepsilon>0$ and $n$ large enough, \textit{there exists} a pair of $\eps$-optimal strategies that satisfies the constant payoff property.

Regarding finite stochastic games, the conjecture holds due to the existence of a uniform value \cite{MN81}, a result we will revisit in the main body of the paper. However, the strategies constructed in \cite{MN81} are notably intricate, as players' choices can depend on the entire past history of states and actions. In the case of absorbing games, \cite{SV20,OB22}\footnote{The paper \cite{SV20} addresses absorbing games with compact action sets, while \cite{OB22} focuses on finite absorbing games and smooth stochastic games. The latter refers to a specific class of stochastic games characterized by strong assumptions on the rate of transitions between states when players employ optimal strategies.} demonstrate that for any $\varepsilon > 0$ and sufficiently large $n$, there exists a pair of $\varepsilon$-optimal Markovian strategies that satisfies the constant payoff property. Unlike the strategies in \cite{MN81}, Markovian strategies depend only on the current state and stage, making them significantly simpler. For general stochastic games with discounted payoffs, a similar result was established in \cite{OBZ18}.

This paper adds to this body of work by demonstrating that in general stochastic games, the aforementioned property holds for $n$-stage payoffs too:
for any $\varepsilon > 0$ and sufficiently large $n$, there exists a pair of $\varepsilon$-optimal Markovian strategies in the $n$-stage game that satisfies the constant payoff property.
The proof uses a class of Markovian strategies, called \textit{adapted strategies}, similar to the asymptotically optimal strategies considered in \cite{Z18g}. At each stage $m$, these strategies play optimally in a discounted game with a discount factor of $\frac{1}{n-m+1}$. This discount factor reflects the importance of stage $m$ relative to the remaining stages of the game. We consider a variation where the discount factor is piecewise constant, updated only at regular intervals. We prove that these adapted strategies are asymptotically optimal in the $n$-stage game and satisfy the constant payoff property. To establish their asymptotic optimality, we employ the same operator-based approach used in \cite{Z18g}. That technique is also reminiscent of an argument by Neyman, that used the operator approach to prove that if $(v_\lambda)$ has bounded variation, then $(v_n)$ converges (see \cite[Theorem C.8, p.177]{sorin02b} and \cite[Theorem 4, p.401]{NS03b}). 
\\
To establish the constant payoff property, we rely on the fact that, by construction, adapted strategies behave locally as optimal strategies in the corresponding discounted game. Using results from \cite{OBZ18}, we deduce that these strategies yield a constant payoff within each block, thereby proving the constant payoff property. The main challenge lies in determining an appropriate block size. Specifically, the block size must be sufficiently large for the results of \cite{OBZ18} to hold. However, if the block size is too large, the strategy may fail to update the discount factor frequently enough, potentially leading to suboptimal outcomes. Our proof technique is largely independent of \cite{SV20, OB22}, which heavily relied on the specific structure of optimal strategies in absorbing games.

The structure of the paper is as follows. In \CRef{sec:mod}, we introduce the stochastic game model and state the main results, \CRef{CP-MainProp,CP-MainTheorem,cor:main}. \CRef{sec:opt} is dedicated to proving \CRef{CP-MainProp}. In \CRef{sec:tech}, we present preliminary results necessary for the proof of Theorem \ref{CP-MainTheorem}, which is completed in \CRef{sec:constant}. Finally, \CRef{sec:perspectives} discusses possible generalizations and future research directions. 
\section{Model and Main Results} \label{sec:mod}
\subsection{Stochastic Games}

Given a finite set $A$, we denote by $\Delta(A)$ the set of probability distributions over $A$. 

\paragraph{Model}
A zero-sum stochastic game is described by a tuple $\Gamma=(\Omega,I,J,g,q)$, where $\Omega$ is the state space, $I$ is Player 1's action set, $J$ is Player 2's action set, $g:\Omega \times I \times J \rightarrow \R$ is the payoff function, and $q: \Omega \times I \times J \rightarrow \Delta(\Omega)$ is the transition function. We assume that $\Omega$, $I$ and $J$ are finite sets. 
\\
The game proceeds as follows: at each stage $m \geq 1$, simultaneously and independently, Player 1 picks $i_m \in I$ and Player 2 picks $j_m \in J$. Player 1 receives the stage payoff $g_m:=g(\omega_m,i_m,j_m)$, while Player 2 receives $-g_m$. The next state $\omega_{m+1}$ is drawn from the distribution $q(\omega_m,i_m,j_m)$. Players observe $(\omega_{m+1},i_m,j_m)$, and the game proceeds to the next stage. 
\paragraph{Strategies}
An element of $H_m:=(\Omega \times I \times J)^{m-1} \times \Omega$ is called an \textit{history before stage $m$}. 
A \textit{strategy} for Player 1 is a collection of mappings $(\sigma_m)_{m \geq 1}$, where $\sigma_m: H_m \rightarrow \Delta(I)$, with the following interpretation: if the history before stage $m$ is $h_m:=(\omega_1,i_1,j_1,\dots,\omega_{m-1},i_{m-1},j_{m-1},\omega_m)$, then Player 1 draws $i_m$ according to the distribution $\sigma(h_m)$. 
Similarly, a strategy for Player 2 is a collection of mappings $(\rho_m)_{m \geq 1}$, where $\rho_m: H_m \rightarrow \Delta(J)$. 
\\
A \textit{Markov strategy} is a strategy that plays according to the current state and the current stage only. A Markov strategy for Player 1 can be identified with a mapping $\sigma :\N \times \Omega \rightarrow \Delta(I)$, and a Markov strategy for Player 2 can be identified with a mapping $\rho :\N \times \Omega \rightarrow \Delta(J)$. 

A \textit{stationary strategy} is a strategy that plays according to the current state only. A stationary strategy for Player 1 can be identified with a mapping $x:\Omega \rightarrow \Delta(I)$, and a stationary strategy for Player 2 can be identified with a mapping $y:\Omega \rightarrow \Delta(J)$. 
\paragraph{$n$-stage game and $\lambda$-discounted game}
An initial state $\omega$ and a pair of strategies $(\sigma,\rho)$ induce a probability measure on the each set of histories $H_m$. Thanks to the Kolmogorov extension theorem, it can be extended uniquely to a probability measure on $(\Omega \times I \times J)^{\N}$, that is denoted by $\P^{\omega}_{\sigma,\rho}$. The expectation with respect to $\P^{\omega}_{\sigma,\rho}$ is denoted by $\E^\omega_{\sigma,\rho}$. 
\\
The \textit{$n$-stage game} $\Gamma_n(\omega)$ is the normal-form game $(\Sigma,T,\gamma^{\omega}_n)$, where $\gamma^{\omega}_n$ is the payoff function defined by
\begin{equation*}
\gamma^{\omega}_n(\sigma,\rho)=\E^{\omega}_{\sigma,\rho} \left(\frac{1}{n} \sum_{m=1}^n g_m \right).
\end{equation*}
This game has a value \cite{SH53}, that is denoted by $v_n(\omega)$:
\begin{equation*}
v_n(\omega):=\max_{\sigma \in \Sigma} \min_{\rho \in T} \gamma^\omega_n(\sigma,\rho)=\min_{\rho \in T}  \max_{\sigma \in \Sigma} \gamma^\omega_n(\sigma,\rho).
\end{equation*}
Let $\varepsilon>0$. A strategy $\sigma \in \Sigma$ of Player 1 is \textit{optimal} (resp., $\varepsilon$-optimal) in $\Gamma_n(\omega)$ if for all $\rho \in T$, $\gamma_n^{\omega}(\sigma,\rho) \geq v_n(\omega)$ (resp., $\gamma_n^{\omega}(\sigma,\rho) \geq v_n(\omega)-\varepsilon$). A strategy $\rho \in T$ of Player 2 is \textit{optimal} (resp., $\varepsilon$-optimal) in $\Gamma_n(\omega)$ if for all $\sigma \in \Sigma$, $\gamma_n^{\omega}(\sigma,\rho) \leq v_n(\omega)$ (resp., $\gamma_n^{\omega}(\sigma,\rho) \leq v_n(\omega)+\varepsilon$). An optimal strategy in $\Gamma_n$ is a strategy that is optimal in $\Gamma_n(\omega)$ for any $\omega$.

Let $\lambda \in (0,1]$. The \textit{$\lambda$-discounted game} is the normal-form game $(\Sigma,T,\gamma^{\omega}_\lambda)$, where
$\gamma^{\omega}_\lambda$ is the payoff function defined by
\begin{equation*}
\gamma^{\omega}_\lambda(\sigma,\rho)=\E^{\omega}_{\sigma,\rho} \left(\sum_{m \geq 1} \lambda(1-\lambda)^{m-1} g_m \right).
\end{equation*}
This game has a value \cite{SH53}, that is denoted by $v_\lambda(\omega)$:
\begin{equation*}
v_\lambda(\omega):=\max_{\sigma \in \Sigma} \min_{\rho \in T} \gamma^\omega_\lambda(\sigma,\rho)=\min_{\rho \in T}  \max_{\sigma \in \Sigma} \gamma^\omega_\lambda(\sigma,\rho).
\end{equation*}
The notion of optimal strategy and $\varepsilon$-optimal strategy can be defined similarly as in the $n$-stage game. 
\\

Let us recall that, using the fact that $\Omega$, $I$ and $J$ are finite, $(v_n)$ and $(v_\lambda)$ both converge (as $n$ tends to $\pinf$ and $\lambda$ tends to $0$) to the same limit $v^*$, called \textit{limit value} \cite{BK76}.

\subsection{Discounted Optimal Profiles, Adaptive Profiles and Main Results}

We now introduce two concepts of strategy families that play a crucial role in the paper. 
\begin{definition}
A family of stationary strategy pairs $(x_\lambda,y_\lambda)_\lambda$ is a \textit{discounted optimal stationary profile} if for all $\lambda \in \intof{0}{1}$, 
$(x_\lambda,y_\lambda)$ is a pair of optimal strategies in $\Gamma_\lambda$. 
\end{definition}

\begin{notation*}
We introduce some piece of notations that will be explained after Definition \ref{def:adapted}. 
	Let $n \geq 1$. Given $a_n \in \N$, let us define, 
	\begin{itemize}
		\item for all $m \in \Intn{1}{n}$, \quad $k(m) := \floor{\f{m-1}{a_n}}$,
		\item $p_n := \floor{\f{n}{a_n}}$,
		\item for all $k \in \Intn{0}{p_n - 1}$, \quad $\lambda_k^n:= 1/(n-k a_n)$.
	\end{itemize}
\end{notation*}
\begin{definition} \label{def:adapted}
	A family of strategy pairs $(\sigma_n,\rho_n)_n$ is an \textit{adapted profile} if there exists a discounted optimal stationary profile $(x_\lambda,y_\lambda)_\lambda$ and a sequence of positive integers $(a_n)_{n \geq 1}$ such that $a_n/n$ tends to $0$ and such that for all $n \geq 1$, the strategy $\sigma_n$ (resp., $\rho_n$) plays $x_{\lambda_{k(m)}^n}$ (resp., $y_{\lambda_{k(m)}^n}$) at each stage $m \in \Intn{1}{n}$.
\end{definition}
\begin{remark}
An adapted profile is a specific case of a family of Markov strategy pairs.
\end{remark}
To understand the rationale behind Definition \ref{def:adapted}, it is helpful to consider the case where $a_n=1$, which corresponds to $p_n=n$, $k(m)=m-1$, and $\lambda^n_{k(m)}=1/(n-m+1)$. A each stage $m$, the strategy pair $(\sigma_n,\rho_n)$ plays optimally in the game with a discount factor $1/(n-m+1)$. This aligns with the type of strategy considered in \cite{Z18g}, where the discount factor used at stage $m$ reflects the relative weight of the stage compared to the duration of the remaining game. Adapted profiles generalize such strategies. 
The set of integers $\left\{1,\dots,n\right\}$ is divided into $p_n$ blocks of size $a_n$, plus one final block of size $n-p_n a_n$. The integer $k(m)$ represents the index of the block to which $m$ belongs, with the first block being $k=0$. 
On the $k$-th block, $(\sigma_n,\rho_n)$ plays optimally in the discounted game with a discount factor $\lambda^n_k=1/(n-k(m)a_n)$. This discount factor represents the relative weight of the first stage of the block compared to the duration of the remaining game. Thus, adapted profiles can be viewed as block-based extensions of the strategies discussed in \cite{Z18g}, where the discount factor is updated only at the beginning of each block rather than at every stage. 
\\
The condition $a_n/n \rightarrow 0$ ensures that the discount factor is updated frequently enough. 
This property is critical for proving that adapted profiles are almost optimal when the duration is long (\textit{asymptotical optimality}), as we shall demonstrate. 

\begin{definition}
A family of strategy pairs $(\sigma_n,\rho_n)_n$ is an \textit{asymptotically optimal profile} if there exists $\varepsilon_n \rightarrow 0$ such that for all $n\in\N$, the strategies $\sigma_n$ and $\rho_n$ are $\varepsilon_n$-optimal in $\Gamma_n$. 
\end{definition}

\begin{definition}
A family of strategy pairs $(\sigma_n,\rho_n)_{n}$ satisfies the \textit{constant payoff property} if for all $t \in \intoo{0}{1}$ and $\omega \in \Omega$,
$$
 \lim_{n \rightarrow+\infty}  \E^\omega_{\sigma_n, \rho_n}\pac{\f{1}{tn} \Sum{m=1}{\ceil{tn}} g_m} = v^*(\omega).
$$
\end{definition}
A family of strategies satisfies the constant payoff property if for $n$ large enough, the expected average payoff after any positive fraction of the $n$-stage game is approximately equal to the limit value (which itself is close to the $n$-stage value). This property is not straightforward, as one might expect that, in a dynamic setting, a player could sacrifice stage payoffs early in the game to improve the state and secure a better payoff later. However, this is not possible when the strategy profile satisfies the constant payoff property.

We are now ready to state our main results. 
\begin{theorem} \label{CP-MainProp}
Any adapted profile is asymptotically optimal.
\end{theorem}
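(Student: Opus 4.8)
The plan is to prove the \emph{lower} bound for Player~1 — that $\sigma_n$ guarantees approximately the value against \emph{every} strategy $\rho$ of Player~2 — and to obtain the symmetric \emph{upper} bound for $\rho_n$ by running the identical argument from Player~2's side. Since $v_n\to v^*$ and $v_{1/n}\to v^*$ by Bewley--Kohlberg \cite{BK76}, it suffices to establish $\gamma_n^\omega(\sigma_n,\rho)\ge v_{1/n}(\omega)-\eps_n$ for all $\rho$ with $\eps_n\to0$: combined with $\abs{v_n-v_{1/n}}\to0$ this makes $\sigma_n$ (and symmetrically $\rho_n$) $\eps_n'$-optimal in $\Gamma_n$ for some $\eps_n'\to0$, which is exactly asymptotic optimality.

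The engine is the one-step inequality expressing that $x_{\lambda_k}$ is optimal in $\Gamma_{\lambda_k}$. Writing $N_k:=n-ka_n$ so that $\lambda_k=1/N_k$, the Shapley equation gives, for every state $\omega$ and every mixed action $y$ of Player~2,
\[
\lambda_k\, g(\omega,x_{\lambda_k},y)+(1-\lambda_k)\sum_{\omega'}q(\omega'\mid\omega,x_{\lambda_k},y)\,v_{\lambda_k}(\omega')\ \ge\ v_{\lambda_k}(\omega).
\]
Applying this at each stage $m$ of block $k$ (with $y=\rho(h_m)$), taking conditional expectations, weighting stage $s_k+t$ by $\lambda_k(1-\lambda_k)^{t}$ with $s_k:=ka_n+1$, and summing over the $a_n$ stages of the block, the right-hand side telescopes and yields a lower bound on the \emph{discounted} block payoff in terms of $\E^\omega_{\sigma_n,\rho}[v_{\lambda_k}(\omega_{s_k})]$ and $\E^\omega_{\sigma_n,\rho}[v_{\lambda_k}(\omega_{s_{k+1}})]$. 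I would then introduce the potential $V_k(\omega):=N_k\,v_{\lambda_k}(\omega)$ and, after multiplying by $N_k=1/\lambda_k$, prove the block estimate
\[
\E^\omega_{\sigma_n,\rho}\!\left[\Sum{m=s_k}{s_{k+1}-1} g_m\right]\ \ge\ \E^\omega_{\sigma_n,\rho}[V_k(\omega_{s_k})]-\E^\omega_{\sigma_n,\rho}[V_{k+1}(\omega_{s_{k+1}})]-\mathrm{Err}_k .
\]
Summing over $k=0,\dots,p_n-1$ telescopes to $V_0(\omega)=n\,v_{1/n}(\omega)$ minus $\E[V_{p_n}(\cdot)]$, the latter being $O(a_n)$ because $N_{p_n}<a_n$; the fewer than $a_n$ leftover stages (on which $k(m)\le p_n-1$, so the strategy is still defined) contribute a further $O(a_n/n)$. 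Dividing by $n$ leaves $\gamma_n^\omega(\sigma_n,\rho)\ge v_{1/n}(\omega)-O(a_n/n)-\tfrac1n\sum_k\mathrm{Err}_k$.

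Two terms feed $\mathrm{Err}_k$. The first is the gap between the discounted and the Cesàro block averages; using $1-(1-\lambda_k)^t\le\lambda_k t$ this is $O(a_n^2/N_k)$ per block, and $\tfrac1n\sum_k a_n^2/N_k\approx\tfrac{a_n}{n}\log(n/a_n)\to0$ as soon as $a_n/n\to0$, so it is routine. The second, and the real obstacle, is the \emph{value-variation} term produced when, at the block boundary, one replaces $N_k(1-\lambda_k)^{a_n}$ by $N_{k+1}$ and $v_{\lambda_k}$ by $v_{\lambda_{k+1}}$, at a cost $N_{k+1}\,\norm{v_{\lambda_k}-v_{\lambda_{k+1}}}_\infty$; one must show $\tfrac1n\sum_{k=0}^{p_n-1}N_{k+1}\,\norm{v_{\lambda_k}-v_{\lambda_{k+1}}}_\infty\to0$. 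The crude bound $N_{k+1}\le n$ only recovers the total variation of $\lambda\mapsto v_\lambda$, which is finite but does not vanish; the point is that the weight $N_{k+1}\approx1/\lambda_{k+1}$ is large only for the early blocks, where $\lambda_k$ is small and the local oscillation of $v_\lambda$ is correspondingly small. Quantifying this is where I would invoke the Bewley--Kohlberg structure theorem \cite{BK76}: $\lambda\mapsto v_\lambda$ is semi-algebraic with a Puiseux expansion near $0$, so $\norm{v_{\lambda_k}-v_{\lambda_{k+1}}}_\infty\lesssim\norm{v'_\lambda}_\infty(\lambda_{k+1}-\lambda_k)$ with $\lambda_{k+1}-\lambda_k\approx a_n\lambda_k^2$, and the sum compares to $\tfrac1n\int\norm{v'_\lambda}_\infty/\lambda\,d\lambda$ over $[1/n,1/a_n]$, which the Puiseux bound $\norm{v'_\lambda}_\infty=O(\lambda^{1/M-1})$ controls by $O(n^{-1/M})\to0$. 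This variation estimate is the heart of the proof and is exactly the bounded-variation/operator argument in the spirit of Neyman mentioned in the introduction; the telescoping and the discounted-to-Cesàro conversion are the easy parts. Running the mirror-image argument for $\rho_n$ then closes the two-sided bound and completes the proof.
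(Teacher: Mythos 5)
Your proposal is correct, and at bottom it runs on the same engine as the paper's proof: one-step optimality of $x_{\lambda^n_k}$ in the Shapley operator, telescoping across blocks, and the regularity of $\lambda \mapsto v_\lambda$ supplied by Bewley--Kohlberg. Still, the execution differs in two ways worth recording. First, the paper works in value space rather than payoff space: it sets $w^n_m$ equal to the guarantee of $\sigma_n$ in the tail game (precisely your $\inf_\rho$ of the tail payoff), writes the Shapley-type recursions for $w^n_m$ and $v_{\mu^n_m}$, compares them through $\abs{\min A - \min B} \le \max_{j}\abs{a_j - b_j}$, and telescopes the weighted difference $\frac{n-m+1}{n}\norm{w^n_m - v_{\mu^n_m}}_\infty$; your potential $V_k = N_k v_{\lambda^n_k}$ (writing $N_k := n - k a_n = 1/\lambda^n_k$ as you do) and the prefactor $N_k/n$ play exactly this role, so that difference is largely cosmetic. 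The genuine divergence is the treatment of the value-variation term, which you rightly single out as the heart of the matter. The paper discards the weight (bounding $\frac{n-m+1}{n}\le 1$) but truncates the recursion at stage $k_0 a_n$ with $k_0 = \lfloor (1-\varepsilon)p_n\rfloor$, at a one-time cost of order $\varepsilon \norm{g}_\infty$; after truncation every discount factor appearing in the sum is at most $\frac{1}{\varepsilon n}$, so the unweighted sum $\sum_k \norm{v_{\lambda^n_{k+1}} - v_{\lambda^n_k}}_\infty$ is the variation of $v_\lambda$ over an interval shrinking to $0$ and vanishes by bounded variation alone. You instead keep every block and exploit the decay of the weight $N_{k+1}/n$, quantified by the Puiseux estimate $\norm{v'_\lambda}_\infty = O(\lambda^{1/M-1})$. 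Your route buys an explicit rate, $O\pa{n^{-1/M} + \frac{a_n}{n}\log(n/a_n)}$, which the paper's argument does not provide; the paper's route needs only bounded variation, a strictly weaker regularity input, and avoids any quantitative estimate.

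A few small repairs your version needs, none fatal. (i) The Puiseux derivative bound holds only on some interval $(0,\lambda_0]$, and this matters because the definition of an adapted profile allows $a_n$ to stay bounded (e.g.\ $a_n \equiv 1$), in which case late-block discount factors are of order $1$: the at most $\ceil{1/(\lambda_0 a_n)}$ blocks with $\lambda^n_k > \lambda_0$ must be handled by the crude bound $N_{k+1}\norm{v_{\lambda^n_k}-v_{\lambda^n_{k+1}}}_\infty \le 2\norm{g}_\infty/\lambda_0$, whose total contribution after dividing by $n$ is $O(1/(\lambda_0^2 n))$ and so vanishes. In fact you can skip Puiseux altogether: splitting your weighted sum according to $\lambda^n_{k+1} \lessgtr \delta$ bounds it by $\mathrm{Var}_{(0,\delta]}(v_\lambda) + \mathrm{Var}_{(0,1]}(v_\lambda)/(n\delta)$, so plain bounded variation suffices, matching the paper's input. (ii) On the leftover stages $m > p_n a_n$ one has $k(m) = p_n$, not $k(m)\le p_n-1$, and when $n = p_n a_n$ the terminal potential $V_{p_n}$ should be read as $0$; both edge cases are absorbed into your $O(a_n/n)$ accounting. (iii) It is worth stating explicitly that all your error terms are independent of $\rho$, since asymptotic optimality is a uniform-in-$\rho$ statement; with that said, your bound $\gamma^\omega_n(\sigma_n,\rho) \ge v_{1/n}(\omega) - \varepsilon_n$ for all $\rho$, plus the mirror argument for Player 2, completes the proof.
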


\begin{theorem} \label{CP-MainTheorem}
There exists an adapted profile that satisfies the constant payoff property. 
\end{theorem}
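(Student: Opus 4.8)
The plan is to reduce the statement to a \emph{value-constancy} property and then to establish the latter through a blockwise comparison with discounted games. Fix a discounted optimal profile $(x_\lambda,y_\lambda)_\lambda$ and let $(\sigma_n,\rho_n)_n$ be the associated adapted profile, with the block length $a_n$ to be chosen later (subject to $a_n\to\infty$ and $a_n/n\to 0$). Throughout, write $s_k:=ka_n+1$ for the first stage of block $k$, so that on block $k$ the players use the stationary pair $(x_{\lambda_k^n},y_{\lambda_k^n})$, optimal in the $\lambda_k^n$-discounted game, and recall that $1/\lambda_k^n=n-ka_n$ is the number of stages remaining at the start of the block.

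First I would record an elementary reduction. By \CRef{CP-MainProp} the adapted profile is asymptotically optimal, so $\E^\omega_{\sigma_n,\rho_n}[\f1n\Sum{m=1}{n}g_m]\to v^*(\omega)$. Moreover the profile is \emph{self-similar}: for a block index $k_0$ with $k_0 a_n\approx tn$, the restriction of $(\sigma_n,\rho_n)$ to stages $\geq s_{k_0}$ is again an adapted profile, now for the $(n-k_0a_n)$-stage game, since $\lambda_k^n=1/((n-k_0a_n)-(k-k_0)a_n)$ for $k\ge k_0$. Applying \CRef{CP-MainProp} to this tail (whose horizon $n-k_0a_n\approx(1-t)n$ tends to $\pinf$), conditionally on $\omega_{s_{k_0}}$ and then taking expectations, gives $\E^\omega_{\sigma_n,\rho_n}[\f1n\Sum{m=s_{k_0}}{n}g_m]=(1-t)\,\E^\omega_{\sigma_n,\rho_n}[v^*(\omega_{s_{k_0}})]+o(1)$. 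Subtracting, and absorbing the $o(1)$ discrepancy between $s_{k_0}-1$ and $\ceil{tn}$ (a block of $o(n)$ bounded payoffs), I obtain
\[
\E^\omega_{\sigma_n,\rho_n}\pac{\f1n\Sum{m=1}{\ceil{tn}}g_m}=v^*(\omega)-(1-t)\,\E^\omega_{\sigma_n,\rho_n}\pac{v^*(\omega_{s_{k_0}})}+o(1).
\]
Since $1-t\neq 0$, the constant payoff property is thus \emph{equivalent} to the value-constancy statement $\E^\omega_{\sigma_n,\rho_n}[v^*(\omega_{s_{k_0}})]\to v^*(\omega)$, uniformly for $t$ in compact subsets of $\intoo01$.

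The core of the proof is therefore to establish value-constancy, and this is where the discounted constant payoff property of \cite{OBZ18} enters. On each block the players play optimally in the $\lambda_k^n$-discounted game, and since $a_n\lambda_k^n=a_n/(n-ka_n)\to 0$ uniformly for $k\le k_0$, the block carries only a vanishing fraction $1-(1-\lambda_k^n)^{a_n}\approx a_n\lambda_k^n$ of the discounted weight. The result of \cite{OBZ18} then yields that the expected payoff accumulated on the block, normalized by its length, is close to the limit value at the state where the block begins. Equivalently, writing the Shapley equation for $v_{\lambda_k^n}$ along the conditionally stationary play on block $k$, the increment $\E[v_{\lambda_k^n}(\omega_{s_{k+1}})]-\E[v_{\lambda_k^n}(\omega_{s_k})]$ equals $\frac{\lambda_k^n}{1-\lambda_k^n}$ times the block sum of $\E[v_{\lambda_k^n}(\omega_m)]-\E[g_m]$, which \cite{OBZ18} forces to be $o(a_n)$, so that the increment is $o(a_n\lambda_k^n)$. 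Chaining these increments and handling the change of discount factor between consecutive blocks via the uniform convergence $v_\lambda\to v^*$ should give the desired value-constancy.

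The delicate point, and the main obstacle, is the accumulation of errors over the $\Theta(n/a_n)$ blocks. A naive triangle-inequality bound fails: the per-block corrections sum, through $\Sum{k}{}a_n\lambda_k^n\approx\ln\frac{1}{1-t}$, only to an $O(1)$ quantity, and the errors from replacing $v_{\lambda_k^n}$ by $v_{\lambda_{k+1}^n}$ between blocks are governed by the variation of $\lambda\mapsto v_\lambda$ and do not vanish under crude estimates. To obtain genuine cancellation I would follow the operator-based approach used for \CRef{CP-MainProp} (and in \cite{Z18g}): track the expected value $\E[v_{\lambda_k^n}(\omega_{s_k})]$ through the nonexpansive Shapley operators, so that single-block errors are absorbed rather than added, and choose $a_n$ growing fast enough to suppress the discretization error of the varying discount factor while keeping $a_n/n\to 0$. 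The preliminary results of \CRef{sec:tech} are precisely the uniform, quantitative versions of the discounted constant payoff and of the operator estimates needed to make this summation rigorous; combining them with the reduction above completes the proof.
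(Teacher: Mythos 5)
Your overall architecture matches the paper's: reduce the constant payoff property to the value-constancy statement $\E^\omega_{\sigma_n,\rho_n}\pac{v^*(\omega_{\ceil{tn}+1})}-v^*(\omega)\to 0$, then derive value-constancy block by block from the discounted results of \cite{OBZ18}. Your reduction step is essentially sound and is even a reasonable alternative to the paper's \CRef{CP-PropBrunoAsympt}: where the paper argues by a profitable-deviation contradiction, you exploit the self-similarity of adapted profiles and apply \CRef{CP-MainProp} to the tail (modulo the repairable point that the tails form an adapted profile only along a subsequence of horizons).

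The genuine gap is in the core step, and you name it yourself without closing it. Having observed that a crude summation over the $\Theta(n/a_n)$ blocks is problematic, you defer to an ``operator-based approach'' yielding ``genuine cancellation'' and to ``the preliminary results of \CRef{sec:tech}'', which you neither prove nor correctly identify: those results are not operator estimates (the operator argument serves only \CRef{CP-MainProp}), and the paper's proof involves no cancellation at all. What actually closes the argument is a quantitative, diagonal choice of the block length $a_n$, which your proposal never makes. Concretely, the paper (i) upgrades the value-constancy of \cite{OBZ18} to be \emph{uniform} in $t\in\intof{0}{7/8}$ (\CRef{CP-propunif}, obtained from the almost-Lipschitz-in-$t$ bound coming from Shapley's equations together with \CRef{CP-lemmauniform}); (ii) uses this uniformity to choose thresholds $\mu_p$ so that the within-block variation of $\E\pac{v^*}$ under $(x_\lambda,y_\lambda)$ is at most $p^{-2}$ whenever $\lambda\leq\mu_p$ --- here one needs that every block stage $j'\leq a_n+1$ equals $\phi(\lambda^n_k,t)$ for some $t\leq 7/8$, which holds because $\lambda^n_k\leq 1/a_n$; and (iii) defines $a_n$ as the \emph{smallest} $a$ with $1/a\leq\mu_{\floor{n/a}}$, so that each of the at most $p_n=\floor{n/a_n}$ blocks contributes at most $p_n^{-2}$ and the plain triangle inequality gives a total error of order $p_n^{-1}\to 0$. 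So the ``naive'' bound does work once the per-block error is calibrated against the number of blocks; by contrast, your proposed mechanism --- per-block increments $o(a_n\lambda^n_k)$ summed against $\Sum{k}{}a_n\lambda^n_k=O(1)$ --- would require a uniform quantitative rate in the conclusion of \cite{OBZ18}, which is purely qualitative as stated, and you provide no way to obtain one. Without (i)--(iii) or a substitute, the proof is incomplete at precisely its crucial point.
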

Combining the two theorems gives the following result, which was the main purpose of the paper.
\begin{theorem} \label{cor:main}
There exists a family of Markovian strategy pairs that is asymptotically optimal and that satisfies the constant payoff property.  
\end{theorem}
\subsection{Existence of the uniform value and constant payoff property}
Mertens and Neyman \cite{MN81} proved that any stochastic game has a \textit{uniform value}, which implies that for each $\varepsilon>0$, there exists $n_0 \geq 1$ and a pair of strategies $(\sigma,\rho)$ such that for all $n \geq n_0$, $\sigma$ and $\rho$ are $\varepsilon$-optimal in $\Gamma_n$. 
As a result, for any $t \in ]0,1[$ and $\omega \in \Omega$, 
\begin{equation*}
\liminf_{n \rightarrow +\infty} \E^{\omega}_{\sigma,\rho} \left(\frac{1}{tn} \sum_{m=1}^{\ceil{tn}} g_m \right) \geq v^*(\omega)-\varepsilon \quad \text{and} \quad 
\limsup_{n \rightarrow +\infty} \E^{\omega}_{\sigma,\rho} \left(\frac{1}{tn} \sum_{m=1}^{\ceil{tn}} g_m \right) \leq v^*(\omega)+\varepsilon.
\end{equation*}
By diagonal extraction, we can construct a sequence of strategies $(\sigma_n,\rho_n)$
that is asymptotically optimal and satisfies the constant payoff property. However, in general, these strategies are not Markovian. Therefore, to prove Theorem \ref{cor:main}, we must adopt a different approach to constructing the strategies, one that does not rely on the Mertens and Neyman framework.
\subsection{Proof outline}
\paragraph{Proof of Theorem \ref{CP-MainProp}}
Theorem \ref{CP-MainProp} has been proved in \cite{Z18g}, in the particular case where $a_n=1$ (see the paragraph after Definition \ref{def:adapted}). The proof in the general case follows a similar approach. Consider $(\sigma_n,\rho_n)$ an adapted profile, and $w^n_m$ the payoff guaranteed by the strategy $\sigma_n$ in the game starting from stage $m$. Using Shapley equations, we establish some inequality that connects $\norm{w^n_m-v_{\lambda^n_{k(m)}}}_\infty$ and $\norm{w^n_{m+1}-v_{\lambda^n_{k(m+1)}}}_\infty$, of the following form:
\begin{equation} \label{ineq:form}
\norm{w^n_m-v_{\lambda^n_{k(m)}}}_\infty \leq b_{m,n} \norm{w^n_{m+1}-v_{\lambda^n_{k(m+1)}}}_\infty+c_{m,n},
\end{equation}
where the $b_{m,n}$ are in $[0,1)$, the $(c_{m,n})$ are positive real numbers, and for all $\varepsilon>0$, $\sum_{m=1}^{\lfloor (1-\varepsilon) n \rfloor} c_{m,n}$ tends to 0 as $n$ tends to infinity. The latter property uses crucially that $a_n/n$ tends to 0. Iterating inequality \ref{ineq:form} from $m=1$ to $\lfloor (1-\varepsilon) n \rfloor$ yields that 
$\norm{w^n_1-v_{\lambda^n_{k(1)}}}_\infty=\norm{w^n_1-v_{\frac{1}{n}}}_\infty$ tends to 0. Because $v_{\frac{1}{n}}$ tends to $v^*$ as $n$ tends to infinity, this proves the theorem.

\paragraph{Proof of Theorem \ref{CP-MainTheorem}}
Let $(\sigma_n, \rho_n)_n$ be a family of Markovian strategy pairs. 
 First, we prove that if $(\sigma_n, \rho_n)_n$ is asymptotically optimal and satisfies the condition that for all $t \in ]0,1[$ and $\omega \in \Omega$, 
\begin{equation} \label{eq:constant}
\E_{\sigma_n, \rho_n}^\omega\pa{v^*(\omega_{\ceil{tn}+1}) - v^*(\omega)} \ttu{n \to \pinf} 0,
\end{equation}
then $(\sigma_n, \rho_n)_n$ satisfies the constant payoff property. In particular, this holds when $(\sigma_n,\rho_n)$ is an adapted profile. A similar result was established in the discounted case \cite{OBZ18} (see Proposition 3.1). 

Next, we use a result from \cite{OBZ18}, that implies that the difference between the expectations of $v^*(\omega_{\ell})$ and $v^*(\omega_m)$ for two stages $\ell$ and $m$ within the same block $k$ is bounded by an error term $\beta_n$, which vanishes as $n$ tends to infinity. This stems from the fact that within a block, adapted strategies play optimally in a discounted game with a fixed discount factor. Summing over all blocks, we obtain that for any $m$ not too close to $n$, the expectation of $v^*(\omega_m)$ is close to $v^*(\omega)$, with an error term of order $p_n \beta_n$. By selecting $p_n$ such that $p_n \beta_n$ tends to 0 as $n$ tends to infinity, we obtain \Cref{eq:constant}, and Theorem \ref{CP-MainTheorem} follows. 

Interestingly, to ensure that $p_n \beta_n$ tends to 0, it is essential that the number of blocks $p_n$ is not too large. On the other hand, to prove that adapted strategies are asymptotically optimal, we need the fact that $a_n/n$ tends to 0, meaning that $p_n$ must be sufficiently large. Thus, there is a trade-off between ensuring the constant payoff property and asymptotic optimality. Consequently, the sequence $(a_n)$ must be chosen carefully to satisfy both conditions simultaneously, which constitutes the main difficulty of the proof. 

%

\section{Proof of \CRef{CP-MainProp}} \label{sec:opt}

\begin{proof}
Let $(\sigma_n,\rho_n)_n$ be an adapted profile, let $(x_\lambda,y_\lambda)_\lambda$ be the corresponding discounted optimal profile and $(a_n)_n$ be the corresponding sequence. Let us recall the following definitions:
 $$k(m) := \floor{\f{m-1}{a_n}}, \quad p_n:=\lfloor n/a_n \rfloor \qandq \lambda_k^n := 1/(n-k a_n).$$ 
Let us also define $\mu^n_m := \lambda^n_{k(m)}$. 
Hence, at any stage $m \in \Intn{1}{n}$, the strategies $\sigma_n$ and $\rho_n$ play respectively $x_{\mu_m^n}$ and $y_{\mu_m^n}$. 
\\
Define $w^n_m$ as the payoff guaranteed by $\sigma_n$ in the $n$-stage game starting from stage $m$. Let $\omega \in \Omega$. Define
\begin{equation*}
g_j:=\sum_{i \in I} x_{\mu^n_m}(i|\omega) g(\omega,i,j),
\end{equation*}
where $x_{\mu^n_m}(i|\omega)$ designates the probability that action $i$ is chosen at state $\omega$, under the stationary strategy $x_{\mu^n_m}$. 
The quantity $g_j$ represents the expected stage payoff at state $\omega$, given that Player 1 plays the mixed action $x_{\mu^n_m}(.|\omega)$ and Player 2 plays $j$. 
Moreover, define
\begin{equation*}
w_j:=\sum_{(i,\omega') \in I \times \Omega} x_{\mu^n_m}(i|\omega) q(\omega'|\omega,i,j) w_{m+1}^n(\omega') \quad \text{and} \quad
w'_j:=\sum_{(i,\omega') \in I \times \Omega} x_{\mu^n_m}(i|\omega) q(\omega'|\omega,i,j) v_{\mu^n_m}(\omega').
\end{equation*}
The quantity $w_j$ is the expectation of $w_{m+1}^n(\tilde{\omega})$, where $\tilde{\omega}$ is the (random) next state, given that Player 1 plays the mixed action $x_{\mu^n_m}(.|\omega)$ and Player 2 plays $j$. The quantity $w'_j$ can be understood similarly, up to replacing $w_{m+1}^n(\omega')$ by $v_{\mu^n_m}(\omega')$.
Let $\omega \in \Omega$. 
Using Shapley's equations \cite{SH53}, we have
\begin{equation*}
w^n_m(\omega)=\min_{j \in J} \left\{ \frac{1}{n-m+1} \cdot g_j+\pa{1-\frac{1}{n-m+1}} \cdot w_j \right\}.
\end{equation*}
Similarly, we have
\begin{equation*}
v_{\mu^n_m}(\omega) = \min_{j \in J} \left\{ \mu^n_m \cdot g_j+(1-\mu^n_m) \cdot w'_j \right\}.
\end{equation*}
Consider two finite sets of real numbers $A = \{a_j\}_{j \in J}$ and $B = \{b_j\}_{j\in J}$. The following inequality holds: $$\abs{\min A - \min B} \leq \max_{j \in J} \abs{a_j - b_j}.$$
We deduce that
\begin{eqnarray*}
|w^n_m(\omega)-v_{\mu^n_m}(\omega)| &\leq& 
 |g_j| \abs{\frac{1}{n-m+1}-\mu^n_m}+\left|\pa{1-\frac{1}{n-m+1}} \cdot w_j- (1-\mu^n_m) \cdot w'_j  \right|
 \\
&\leq&
 |g_j| \abs{\frac{1}{n-m+1}-\mu^n_m}+\pa{1-\frac{1}{n-m+1}} \cdot \left|w_j-w'_j\right|+|w'_j|\abs{\frac{1}{n-m+1}-\mu^n_m}
 \\
 &\leq&
2 \norm{g}_\infty \abs{\frac{1}{n-m+1}-\mu^n_m}+ \pa{1-\frac{1}{n-m+1}}\norm{w^n_{m+1}-v_{\mu^n_m}}_\infty
\\
 &\leq&
2 \norm{g}_\infty \abs{\frac{1}{n-m+1}-\mu^n_m}
\\
&+& \pa{1-\frac{1}{n-m+1}}
\left(\norm{w^n_{m+1}-v_{\mu^n_{m+1}}}_\infty+
\norm{v_{\mu^n_{m+1}}-v_{\mu^n_{m}}}_\infty \right),
\end{eqnarray*}
where in the second-to-last inequality, we used the fact that $|g_j| \leq \norm{g}_\infty$ and $\left|w_j-w'_j\right| \leq \norm{w^n_{m+1}-v_{\mu^n_m}}_\infty$. 
Multiplying both sides by $\f{n-m+1}{n}$ yields
\begin{align*}
&\frac{n-m+1}{n} \norm{w^n_m-v_{\mu^n_m}}_\infty - \frac{n-m}{n} \norm{w^n_{m+1}-v_{\mu^n_{m+1}}}_\infty 
\\ &\qquad \leq  \f{n-m+1}{n} \pa{2 \norm{g}_\infty \abs{\frac{1}{n-m+1}-\mu^n_m}
+\norm{v_{\mu^n_{m+1}}-v_{\mu^n_m}}_\infty}.
\end{align*}
We deduce that
\begin{equation} \label{eq:tel}
\frac{n-m+1}{n} \norm{w^n_m-v_{\mu^n_m}}_\infty - \frac{n-m}{n} \norm{w^n_{m+1}-v_{\mu^n_{m+1}}}_\infty 
\leq  2 \norm{g}_\infty \abs{\frac{1}{n-m+1}-\mu^n_m}
+\norm{v_{\mu^n_{m+1}}-v_{\mu^n_m}}_\infty.
\end{equation}
Fix $\varepsilon \in (0,1)$, and define $k_0:=\lfloor (1-\varepsilon) p_n \rfloor$. Let 
$u_m:=\frac{n-m+1}{n} \norm{w^n_m-v_{\mu^n_m}}_\infty, m \geq 1$. We have
\begin{eqnarray*}
\sum_{m=1}^{k_0 n} u_m-u_{m+1}=u_1-u_{k_0a_n+1}&=&
\norm{w^n_1-v_{\mu^n_1}}_\infty - \frac{n-k_0 a_n}{n} \norm{w^n_{k_0 a_n+1}-v_{\mu^n_{k_0 a_n+1}}}_\infty
\\
&\geq& \norm{w^n_1-v_{\lambda^n_0}}_\infty - 2\left(\varepsilon+\frac{2 a_n}{n} \right) \norm{g}_\infty,
\end{eqnarray*}
where we used in the last line that by definition, $\mu^n_1=\lambda^n_{k(1)}=\lambda^n_0$, and moreover,  
\\
$n-k_0 a_n \leq n-[(1-\varepsilon)[n/a_n-1]-1] a_n \leq 
\varepsilon n +2a_n.$
Summing  \CRef{eq:tel} from $m = 1$ to $k_0 a_n$, we obtain
\begin{equation} \label{eq:gar}
\norm{w^n_1-v_{\lambda^n_0}}_\infty \leq 2 \left(\varepsilon+\frac{2 a_n}{n} \right) \norm{g}_\infty + 2 \norm{g}_\infty \sum_{m=1}^{k_0 a_n} \left|\frac{1}{n-m+1}-\mu^n_m \right|+ \sum_{m=1}^{k_0 a_n} \norm{v_{\mu^n_{m+1}}-v_{\mu^n_m}}_\infty.
\end{equation}

Let us bound from above the two sums, starting with the first one. For $k  \in \Intn{0}{k_0-1}$ and ${m \in \Intn{k a_n +1}{ (k+1) a_n }}$, we have
\begin{align*}
\abs{\frac{1}{n-m+1}-\mu^n_m} &= \abs{\frac{1}{n-m+1}-\lambda^n_k} \\
&\leq \frac{1}{n-(k+1) a_n}-\frac{1}{n-k a_n}.
\end{align*}
We deduce that
\begin{eqnarray*}
\sum_{m= 1}^{k_0 a_n} \abs{\frac{1}{n-m+1}-\mu^n_m}
&\leq& a_n \sum_{k=0}^{k_0-1} \pa{\frac{1}{n-(k+1) a_n}-\frac{1}{n-k a_n}}
\\
&\leq& a_n \frac{1}{n-k_0 a_n}
\\
&\leq& \frac{a_n}{\varepsilon n}.
\end{eqnarray*}
Because $a_n/n \rightarrow 0$, we deduce that the first sum in \CRef{eq:gar} vanishes as $n$ tends to infinity.

Let us now bound the second sum in \CRef{eq:gar}. The family $(v_\lambda)_\lambda$ has bounded variation, the sequence $(\mu^n_m)_{m \geq 1}$ is increasing, and ${\mu^n_{k_0 a_n+1}=\frac{1}{n-k_0 a_n} \leq \frac{1}{\varepsilon n}}$. We deduce that the second sum vanishes as $n$ tends to infinity. 

Using \CRef{eq:gar}, we deduce that
\begin{eqnarray*}
\limsup_{n \rightarrow +\infty} \norm{w^n_1-v_{\lambda^n_0}}_\infty \leq 2 \varepsilon \norm{g}_\infty.
\end{eqnarray*}
Because $v_{\lambda^n_0} = v_{1/n}$ goes to $v^*$, and $\varepsilon$ is arbitrary, we deduce that 
$\lim_{n \rightarrow +\infty} w^n_1=v^*$, hence the family $(\sigma_n)_n$ is asymptotically optimal. Exchanging the roles of the players, we obtain the proof of \CRef{CP-MainProp}. 
\end{proof}

\section{Preliminary Results for the Proof of Theorem \ref{CP-MainTheorem}} \label{sec:tech}
This section presents two technical results, primarily adapted from \cite{OBZ18}, which will be useful for the proof of \CRef{CP-MainTheorem}. First, we recall the results from \cite{OBZ18} that we will exploit. 

\subsection{Results from the Paper \cite{OBZ18}}
For all $\lambda \in \intoo{0}{1}$ and $t \in [0,1[$, define 
$$\phi(\lambda,t):=\inf \left\{M \geq 1, \sum_{m=1}^M \lambda(1-\lambda)^{m-1} \geq t \right\}=\displaystyle \ceil{\frac{\ln(1-t)}{\ln(1-\lambda)}} \in \N \cup \{\pinf\}.$$
Let us state Proposition 
4.4 from \cite{OBZ18}:
\begin{proposition} \label{CP-PropBruno}
	Let $(x_\lambda, y_\lambda)_\lambda$ be a discounted optimal profile. 
	The following properties are equivalent:
	\begin{enumerate}
		\item The family $(x_\lambda, y_\lambda)_\lambda$ satisfies the \textit{discounted constant payoff property} : for all $t \in \intoo{0}{1}$ and $\omega \in \Omega$,
$$
\lim_{\lambda \to 0}  \E^\omega_{x_\lambda, y_\lambda}\pac{ \Sum{m=1}{\phi(\lambda, t)} \lambda(1-\lambda)^{m-1}g_m} = tv^*(\omega).
$$
		
		\item For all $\omega \in \Omega$ and $t \in \intoo{0}{1}$, 
		\quad $\Lim_{\lambda\to 0} \pa{\E^\omega_{x_\lambda, y_\lambda}\pac{v^*(\omega_{\phi(\lambda, t)})} - v^*(\omega)} = 0$.
	\end{enumerate}
\end{proposition}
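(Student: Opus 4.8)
The plan is to reduce the whole equivalence to a single martingale identity coming from Shapley's equation, and then read off both implications by an elementary rearrangement. Throughout, fix $t \in \intoo{0}{1}$ and $\omega \in \Omega$, write $\phi := \phi(\lambda,t)$ (a finite integer for these parameters), and note that the strategies $(\sigma_\lambda,\rho_\lambda)$ appearing in property~2 are the discounted optimal strategies $(x_\lambda,y_\lambda)$.

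First I would record the consequence of \emph{joint} optimality. Since both $x_\lambda$ and $y_\lambda$ are optimal in $\Gamma_\lambda$, Shapley's equation holds with equality along the realized play:
$$v_\lambda(\omega_m) = \E^\omega_{x_\lambda,y_\lambda}\pac{\lambda g_m + (1-\lambda)v_\lambda(\omega_{m+1}) \| h_m}, \qquad m \geq 1,$$
where $h_m$ is the history before stage $m$. (A single optimal strategy yields only one inequality; it is the two optimalities together that force the equality.) I would then introduce
$$Y_k := \Sum{m=1}{k-1}\lambda(1-\lambda)^{m-1}g_m + (1-\lambda)^{k-1}v_\lambda(\omega_k), \qquad k \geq 1,$$
for which $Y_1 = v_\lambda(\omega)$, and check from the displayed equality that $\E^\omega_{x_\lambda,y_\lambda}\pac{Y_{k+1}-Y_k \| h_k}=0$. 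Thus $(Y_k)$ is a martingale, so $\E^\omega_{x_\lambda,y_\lambda}[Y_\phi]=v_\lambda(\omega)$, which rearranges to
$$v_\lambda(\omega) = \E^\omega_{x_\lambda,y_\lambda}\pac{\Sum{m=1}{\phi-1}\lambda(1-\lambda)^{m-1}g_m} + (1-\lambda)^{\phi-1}\,\E^\omega_{x_\lambda,y_\lambda}\pac{v_\lambda(\omega_\phi)}. \qquad (\ast)$$

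Next I would gather three elementary facts as $\lambda \to 0$, all immediate from the closed form $\phi = \ceil{\ln(1-t)/\ln(1-\lambda)}$ and from \cite{BK76}: the total weight $\Sum{m=1}{\phi}\lambda(1-\lambda)^{m-1}=1-(1-\lambda)^\phi \to t$; hence $(1-\lambda)^{\phi-1}\to 1-t$; and $v_\lambda \to v^*$ uniformly on the finite set $\Omega$. Replacing the summation limit $\phi-1$ by $\phi$ alters the sum by at most $\lambda\norm{g}_\infty \to 0$, so the cumulated-payoff term in $(\ast)$ is asymptotically the quantity $P_\lambda$ of property~1. Writing $Q_\lambda := \E^\omega_{x_\lambda,y_\lambda}[v_\lambda(\omega_\phi)]$ for the continuation value of property~2, identity $(\ast)$ then reads $v_\lambda(\omega)-P_\lambda-(1-\lambda)^{\phi-1}Q_\lambda \to 0$.

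The equivalence is now a one-line computation. If property~2 holds, then $Q_\lambda \to v^*(\omega)$, so $P_\lambda \to v^*(\omega)-(1-t)v^*(\omega)=tv^*(\omega)$, which is property~1. Conversely, if property~1 holds, then $(1-\lambda)^{\phi-1}Q_\lambda \to (1-t)v^*(\omega)$, and dividing by $(1-\lambda)^{\phi-1}\to 1-t\neq 0$ gives $Q_\lambda \to v^*(\omega)$, i.e.\ property~2 (using $v_\lambda(\omega)\to v^*(\omega)$). I expect the only genuinely delicate point to be the martingale step, namely justifying that joint optimality upgrades the two Shapley inequalities into the pointwise equality along the play; the asymptotics of $\phi$ and the off-by-one bookkeeping between $\phi-1$ and $\phi$ are routine once the closed form of $\phi$ is used.
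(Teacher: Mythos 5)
Your proof is correct. A preliminary remark: the paper does not prove this proposition at all --- it is recalled verbatim from \cite{OBZ18} (Proposition 4.4 there) --- so there is no internal proof to compare against; but your argument is the natural self-contained one, and its backbone, the identity $(\ast)$, is precisely the Shapley identity \CRef{CP-ShapEq} that the paper itself invokes in the proof of \CRef{CP-propunif}. Two remarks on your write-up. First, the martingale apparatus is overkill: for fixed $(\lambda,t)$ with $t<1$, $\phi(\lambda,t)$ is a \emph{deterministic} finite integer, so iterating the one-stage equality and taking expectations gives $(\ast)$ directly; there is no stopping-time subtlety at all. Second, the step you flag as delicate --- that joint optimality of $(x_\lambda,y_\lambda)$ upgrades the Shapley inequalities to an equality along the play --- is indeed the right thing to worry about, but it is a standard fact with a short proof: fixing $x_\lambda$, Player 2 faces a Markov decision problem whose value is exactly $v_\lambda$ (at least $v_\lambda$ since $x_\lambda$ is optimal, at most $v_\lambda$ since Player 2 can defend $v_\lambda$), and the Bellman equation of that problem reads
\begin{equation*}
v_\lambda(\omega)=\min_{j\in J}\left\{\lambda \sum_{i\in I}x_\lambda(i|\omega)g(\omega,i,j)+(1-\lambda)\sum_{(i,\omega')\in I\times\Omega}x_\lambda(i|\omega)q(\omega'|\omega,i,j)v_\lambda(\omega')\right\},
\end{equation*}
which says that $x_\lambda(\omega)$ is optimal in the one-shot matrix game with value $v_\lambda(\omega)$; symmetrically for $y_\lambda(\omega)$, and a pair of optimal mixed actions in a matrix game yields exactly its value. (This is the same fact the paper uses without comment in the proof of \CRef{CP-MainProp}.) With that filled in, your asymptotic bookkeeping --- $1-(1-\lambda)^{\phi}\to t$, $(1-\lambda)^{\phi-1}\to 1-t\neq 0$, uniform convergence $v_\lambda\to v^*$ on the finite set $\Omega$, and the $O(\lambda)$ off-by-one correction --- is accurate, and both implications follow exactly as you state.
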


Moreover, in \cite{OBZ18}, it has been shown that any discounted optimal profile verifies the discounted constant payoff property:

\begin{theorem} \label{CP-ThmBruno}
	Any discounted optimal profile $(x_\lambda, y_\lambda)_\lambda$ verifies the discounted constant property. In particular, for all $\omega \in \Omega$ and $t \in \intoo{0}{1}$, 
	$$\Lim_{\lambda\to 0} \pa{\E^\omega_{x_\lambda, y_\lambda}\pac{v^*(\omega_{\phi(\lambda, t)})} - v^*(\omega)} = 0.$$
\end{theorem}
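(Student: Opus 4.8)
My plan is to prove the two properties at once by targeting the \emph{In particular} formulation and obtaining the discounted constant payoff property from the equivalence already recorded in \CRef{CP-PropBruno}. Concretely, I would prove directly that
$$\Lim_{\lambda\to 0}\pa{\E^\omega_{x_\lambda,y_\lambda}\pac{v_\lambda(\omega_{\phi(\lambda,t)})}-v_\lambda(\omega)}=0,$$
and then invoke \CRef{CP-PropBruno} to deduce the discounted constant payoff property. The central object is the deterministic sequence $u_m:=\E^\omega_{x_\lambda,y_\lambda}\pac{v_\lambda(\omega_m)}$, for which $u_1=v_\lambda(\omega)$, so the goal reduces to controlling $u_{\phi(\lambda,t)}$ as $\lambda\to 0$.

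First I would record the algebraic backbone. Writing $r_m:=\E^\omega_{x_\lambda,y_\lambda}\pac{g_m}$ and taking expectations in Shapley's equation along the optimal trajectory gives the one-step recursion $u_m=\lambda r_m+(1-\lambda)u_{m+1}$; equivalently, the process $Z_m:=(1-\lambda)^{m-1}v_\lambda(\omega_m)+\Sum{k=1}{m-1}\lambda(1-\lambda)^{k-1}g_k$ is a martingale under $(x_\lambda,y_\lambda)$. Multiplying the recursion by $(1-\lambda)^{m-1}$ and telescoping yields the exact identity
$$\E^\omega_{x_\lambda,y_\lambda}\pac{\Sum{m=1}{\phi(\lambda,t)}\lambda(1-\lambda)^{m-1}g_m}=v_\lambda(\omega)-(1-\lambda)^{\phi(\lambda,t)}\,\E^\omega_{x_\lambda,y_\lambda}\pac{v_\lambda(\omega_{\phi(\lambda,t)+1})}.$$
Since $(1-\lambda)^{\phi(\lambda,t)}\to 1-t$ (read off from $\phi(\lambda,t)=\ceil{\ln(1-t)/\ln(1-\lambda)}$) and $v_\lambda\to v^*$, everything collapses to the single claim $u_{\phi(\lambda,t)}\to v^*(\omega)$ — the shift of the index by one being harmless, as $u_{m+1}-u_m=\lambda\pa{u_{m+1}-r_m}$ is $O(\lambda)$. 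Granting this, the partial-payoff limit $t\,v^*(\omega)=v^*(\omega)-(1-t)v^*(\omega)$ drops out.

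The hard part, which is the genuine content of the theorem, is precisely this last claim: the value process $u_m$ must stay within $o(1)$ of its initial value $v_\lambda(\omega)$ over the whole horizon. This is delicate because $\phi(\lambda,t)\sim -\ln(1-t)/\lambda$ grows like $1/\lambda$ while each increment $u_{m+1}-u_m$ is only $O(\lambda)$, so the crude triangle-inequality bound on $u_{\phi(\lambda,t)}-u_1$ is $O(1)$ and proves nothing; genuine cancellation of the accumulated drift has to be exhibited. This is the step I expect to be the main obstacle, and it is the only place where the fine regularity of $\lambda\mapsto v_\lambda$, rather than mere convergence $v_\lambda\to v^*$, is indispensable.

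To overcome it I would invoke the Bewley--Kohlberg structure: $\lambda\mapsto v_\lambda$ is semialgebraic, hence admits a Puiseux expansion near $0$ and is of bounded variation there. The plan is to partition the first $\phi(\lambda,t)$ stages into blocks, to compare, on each block, the $\lambda$-optimal play with play at a slightly perturbed discount factor, and to telescope the resulting discrepancies using bounded variation so that their sum is $o(1)$ uniformly in $t$. This reproduces the argument of \cite{OBZ18}; combined with the identity above and \CRef{CP-PropBruno}, it establishes both assertions of the theorem.
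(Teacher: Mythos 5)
You are proving a statement that this paper does not prove at all: \CRef{CP-ThmBruno} is imported verbatim from \cite{OBZ18}, and the paper's ``proof'' is that citation. Within your proposal, the reduction step is correct: telescoping Shapley's equation under optimal play gives the martingale identity you write, and together with the equivalence of \CRef{CP-PropBruno} it reduces the theorem to the single claim $\E^\omega_{x_\lambda,y_\lambda}\pac{v_\lambda(\omega_{\phi(\lambda,t)})}-v_\lambda(\omega)\to 0$. But that claim \emph{is} the theorem --- as you yourself say, it is ``the genuine content'' --- and your proposal does not prove it. The final paragraph is a statement of intent (``partition into blocks, compare with perturbed discount factors, telescope using bounded variation, reproducing the argument of \cite{OBZ18}'') rather than an argument, so the proposal has a genuine gap exactly at the step you identify as the main obstacle.

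Moreover, the mechanism you sketch for that step does not work as stated. Telescoping the bounded variation of $\lambda\mapsto v_\lambda$ across blocks played with slightly different discount factors is precisely the tool of this paper's proof of \CRef{CP-MainProp}: there the strategies genuinely switch discount factors from one block to the next, so error terms of the form $\norm{v_{\mu^n_{m+1}}-v_{\mu^n_m}}_\infty$ appear and their sum is dominated by the total variation of the value map. In the $\lambda$-discounted game, by contrast, the players use the same stationary pair $(x_\lambda,y_\lambda)$ at every stage; no perturbed discount factor ever enters the dynamics, and what must be controlled is the drift of the expected value process $\E^\omega_{x_\lambda,y_\lambda}\pac{v_\lambda(\omega_m)}$, whose per-stage increments are $O(\lambda)$ and accumulate to $O(1)$ over the $\sim 1/\lambda$ relevant stages. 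Bounded variation of $\lambda\mapsto v_\lambda$ says nothing, by itself, about cancellation of this drift: the two objects are not linked by any inequality you state. The argument of \cite{OBZ18} requires genuinely finer inputs --- the semialgebraic/Puiseux structure of $\lambda \mapsto v_\lambda$ from \cite{BK76} combined with an envelope-type identity expressing $\frac{d}{d\lambda}v_\lambda$ as a reweighted expected payoff under optimal play, which is what converts regularity of the value map into the required balance between early and late payoffs --- and none of that machinery appears in your proposal. Either carry out that analysis in full, or do what the paper does and cite \cite{OBZ18} for this theorem; as written, the proposal proves only the easy reduction.
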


\subsection{Uniform Convergence in Proposition \ref{CP-PropBruno}}
The convergence in Theorem \ref{CP-ThmBruno} is pointwise in $t$. The following result shows that the convergence is uniform on any interval $]0,T]$, $T \in ]0,1[$.
\begin{proposition} \label{CP-propunif}
For all $\omega \in \Omega$ and $T \in \intoo{0}{1}$, we have
\begin{align*} \label{CP-firstgoal}
\sup_{t \in \intof{0}{T}} \E_{x_{\lambda}, y_{\lambda}}^\omega\pac{v^*(\omega_{\phi({\lambda},t)}) - v^*(\omega)} \ttu{\lambda \to 0} 0.
\end{align*}
\end{proposition}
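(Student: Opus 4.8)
The plan is to upgrade the pointwise limit of \CRef{CP-ThmBruno} to one that is uniform in $t$, by a Dini-type argument resting on two facts: the map $t \mapsto \phi(\lambda,t)$ is nondecreasing, and along the play induced by $(x_\lambda,y_\lambda)$ the expected value moves slowly. First I would reduce $v^*$ to $v_\lambda$. Since $\Omega$ is finite and $v_\lambda \to v^*$ pointwise, $\norm{v_\lambda - v^*}_\infty \to 0$, so for every $t$,
\[
\abs{\pa{\E^\omega_{x_\lambda,y_\lambda}\pac{v^*(\omega_{\phi(\lambda,t)})} - v^*(\omega)} - \pa{\E^\omega_{x_\lambda,y_\lambda}\pac{v_\lambda(\omega_{\phi(\lambda,t)})} - v_\lambda(\omega)}} \le 2\norm{v_\lambda - v^*}_\infty .
\]
Hence it suffices to prove $\Phi_\lambda := \sup_{t \in \intof{0}{T}} \abs{\E^\omega_{x_\lambda,y_\lambda}\pac{v_\lambda(\omega_{\phi(\lambda,t)})} - v_\lambda(\omega)} \ttu{\lambda\to0} 0$; this two-sided uniform statement implies the one-sided claim of the proposition.

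Next I would establish the slow-variation estimate. Writing $g_\lambda(m) := \E^\omega_{x_\lambda,y_\lambda}\pac{v_\lambda(\omega_m)}$ (so that $g_\lambda(1) = v_\lambda(\omega)$), the Shapley equation evaluated at the saddle point $(x_\lambda(\omega'),y_\lambda(\omega'))$ of the auxiliary matrix game reads, at each state, $v_\lambda(\omega') = \lambda\,\bar{g}(\omega') + (1-\lambda)\sum_{\omega''}\bar{q}(\omega''\mid\omega')\,v_\lambda(\omega'')$, where $\bar{g}$ and $\bar{q}$ are the expected payoff and transition under $(x_\lambda(\omega'),y_\lambda(\omega'))$. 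Applying this at $\omega'=\omega_m$, taking $\E^\omega_{x_\lambda,y_\lambda}$ and using the tower property gives $g_\lambda(m) = \lambda\,\E^\omega_{x_\lambda,y_\lambda}\pac{\bar{g}(\omega_m)} + (1-\lambda)\,g_\lambda(m+1)$, whence
\[
g_\lambda(m+1) - g_\lambda(m) = \frac{\lambda}{1-\lambda}\pa{g_\lambda(m) - \E^\omega_{x_\lambda,y_\lambda}\pac{\bar{g}(\omega_m)}}, \qquad \text{so} \qquad \abs{g_\lambda(m+1) - g_\lambda(m)} \le \frac{2\lambda}{1-\lambda}\norm{g}_\infty ,
\]
since values and stage payoffs are bounded by $\norm{g}_\infty$.

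Then comes the Dini argument. Fix $\eps>0$. Because $\phi(\lambda,\cdot)$ is nondecreasing, telescoping the slow-variation bound gives, for $s \le t$, $\abs{g_\lambda(\phi(\lambda,t)) - g_\lambda(\phi(\lambda,s))} \le \pa{\phi(\lambda,t) - \phi(\lambda,s)}\frac{2\lambda}{1-\lambda}\norm{g}_\infty$. Using $\phi(\lambda,t) = \ceil{\ln(1-t)/\ln(1-\lambda)}$ together with $\ceil{a}-\ceil{b}\le a-b+1$,
\[
\phi(\lambda,t) - \phi(\lambda,s) \le \frac{1}{-\ln(1-\lambda)}\,\ln\frac{1-s}{1-t} + 1 ,
\]
so, multiplying by $\frac{2\lambda}{1-\lambda}\norm{g}_\infty$ and using $\lambda/(-\ln(1-\lambda)) \to 1$, the $\limsup$ of the right-hand side as $\lambda\to0$ equals $2\norm{g}_\infty\ln\frac{1-s}{1-t}$. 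Since $t\mapsto\ln(1-t)$ is uniformly continuous on $[0,T]$ (as $T<1$), I would choose a finite grid $0 = s_0 < s_1 < \cdots < s_N = T$ with $2\norm{g}_\infty\ln\frac{1-s_i}{1-s_{i+1}} < \eps$ for every $i$, setting $\phi(\lambda,s_0):=1=\phi(\lambda,0^+)$. For $t\in[s_i,s_{i+1}]$ monotonicity yields $\phi(\lambda,s_i)\le\phi(\lambda,t)\le\phi(\lambda,s_{i+1})$, so by the triangle inequality
\[
\Phi_\lambda \le \max_{0\le i<N}\pa{\phi(\lambda,s_{i+1}) - \phi(\lambda,s_i)}\frac{2\lambda}{1-\lambda}\norm{g}_\infty + \max_{1\le i\le N-1}\abs{g_\lambda(\phi(\lambda,s_i)) - v_\lambda(\omega)} .
\]
Taking $\limsup_{\lambda\to0}$ and commuting it with the finite maxima, the first term is $<\eps$ by the grid choice, while the second vanishes by \CRef{CP-ThmBruno} applied at the finitely many points $s_1,\dots,s_{N-1}$. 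As $\eps$ is arbitrary, $\Phi_\lambda\to0$, which with the first step proves the proposition.

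The main obstacle is precisely the estimate coupling the number of stages between two discounted times to the $O(\lambda)$ per-stage variation: one must ensure that the product $\pa{\phi(\lambda,t)-\phi(\lambda,s)}\cdot O(\lambda)$ stays $O(1)$ and small for nearby $s,t$ \emph{uniformly in} $\lambda$, which is exactly what the logarithmic formula for $\phi$ delivers. Secondary care is required near $t=0$ (handled by the grid point $s_0=0$, using $\phi(\lambda,\cdot)\ge1$ and $g_\lambda(1)=v_\lambda(\omega)$) and in the order of quantifiers, fixing the grid before letting $\lambda\to0$.
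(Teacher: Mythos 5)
Your proof is correct, and at the top level it follows the same scheme as the paper --- invoke \CRef{CP-ThmBruno} at finitely many grid points and spread the convergence to all $t \in \intof{0}{T}$ via an equicontinuity-in-$t$ estimate extracted from the Shapley equations --- but the key estimate is implemented genuinely differently. The paper works with the \emph{weighted} function $f_\lambda(t) = (1-\lambda)^{\phi(\lambda,t)-1}\,\E^\omega_{x_\lambda,y_\lambda}\pac{v_\lambda(\omega_{\phi(\lambda,t)})-v_\lambda(\omega)}$: differencing the iterated Shapley identity \CRef{CP-ShapEq} at two horizons yields the almost-Lipschitz bound $\abs{f_\lambda(t')-f_\lambda(t)} \leq K\pa{(t'-t)+\lambda}$, which is fed into the abstract \CRef{CP-lemmauniform}; at the very end the weight must be removed, using (implicitly) that $(1-\lambda)^{\phi(\lambda,t)-1}$ stays bounded below by roughly $1-T$ on $\intof{0}{T}$. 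You instead work with the unweighted expectation $g_\lambda(m) = \E^\omega_{x_\lambda,y_\lambda}\pac{v_\lambda(\omega_m)}$, derive the per-stage drift bound $\abs{g_\lambda(m+1)-g_\lambda(m)} \leq \tfrac{2\lambda}{1-\lambda}\norm{g}_\infty$ from the one-step Shapley equation at the saddle point of the auxiliary game, and convert it into a modulus of continuity by counting stages, $\phi(\lambda,t)-\phi(\lambda,s) \leq \tfrac{1}{-\ln(1-\lambda)}\ln\tfrac{1-s}{1-t}+1$. The modulus you obtain, $2\norm{g}_\infty\ln\tfrac{1-s}{1-t}$, is logarithmic rather than linear in $t'-t$, so it degenerates as $t \to 1$; this costs nothing here, since the proposition only concerns $\intof{0}{T}$ with $T<1$ and your grid lives in the compact set $[0,T]$. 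What each approach buys: the paper's weighted estimate is Lipschitz uniformly on all of $\intoo{0}{1}$ and isolates a reusable lemma, at the price of introducing and later removing the weight; yours never introduces the weight, so the conclusion follows immediately once the grid is fixed, and the per-stage drift bound is an elementary, transparent fact. Your treatment of the endpoint $t \to 0^+$ (anchoring at $\phi \geq 1$ with $g_\lambda(1) = v_\lambda(\omega)$), the commutation of $\limsup$ with finite maxima, and your initial (rather than final) reduction from $v^*$ to $v_\lambda$ via $\norm{v_\lambda - v^*}_\infty \to 0$ are all sound, so I see no gap.
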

We first need a lemma that is an elementary variation of the Arzela-Ascoli theorem. 
\begin{lemma} \label{CP-lemmauniform}
	Let $(f_{\lambda})_{\lambda\in \intoo{0}{1}}$ be a family of functions such that:
	\begin{itemize}
		\item For all $\lambda \in \intoo{0}{1}$, $f_{\lambda}$ is a function $f_{\lambda} : \intoo{0}{1} \to \R$.
		\item $f_{\lambda} \ttu{\lambda \to 0} 0$ pointwise.
		\item There exists $K > 0$ such that, for all $\lambda>0$ small enough, and all $t,t' \in \intoo{0}{1}$ with $t < t'$:
		
		\begin{equation}
			\abs{f_{\lambda}(t') - f_{\lambda}(t)} \leq K\pa{(t'-t) + \lambda}.
		\end{equation}
	\end{itemize}
	Then $f_{\lambda} \ttu{\lambda \to 0} 0$ uniformly.
\end{lemma}
\begin{proof}
	Let $\eps \in ]0,1/2]$ and consider the finite set $S = \Set{k\eps}{k \in\Intn{1}{\floor{\f{1}{\eps}}-1}}$.

	The limit $f_{\lambda} \ttu{\lambda \to 0} 0$ is uniform on the finite set $S$. Let us show that it is uniform on $\intoo{0}{1}$. 

	Let $\Lambda < \eps$ such that $\forall \lambda \in \intoo{0}{\Lambda}$, the two following inequalities are satisfied:
	
\begin{equation} \label{CP-lipschitz}
	\forall 0<t<t'<1, \quad \abs{f_{\lambda}(t') - f_{\lambda}(t)} \leq K\pa{(t'-t) + \lambda},
		\end{equation}
\begin{equation}
\label{CP-finiteunif}
\forall s \in S, \quad \abs{f_{\lambda}(s)} \leq \eps.
\end{equation}

Let $t \in \intoo{0}{1}$ and an integer $k$ 
such that $k \varepsilon \in S$ and $\abs{t - k\eps} \leq 2\eps$. Combining \CRef{CP-lipschitz} and \CRef{CP-finiteunif}, one has, for all $\lambda < \Lambda$:

\begin{align*}
	\abs{f_{\lambda}(t)} &\leq \abs{f_{\lambda}(t)- f_{\lambda}(k\eps)} + \abs{f_{\lambda}(k\eps)} \\
	&\leq \pa{(t-k\eps) + \lambda} K + \eps \\
	&\leq \pa{3K + 1}\eps,
\end{align*}
which proves that $\norm{f_{\lambda}}_\infty \ttu{\lambda \to 0} 0$.
\end{proof}

\begin{proof}[Proof of \CRef{CP-propunif}]
Using \CRef{CP-ThmBruno}, 
 for all $\omega \in \Omega$ and $t \in \intoo{0}{1}$:
\begin{align} \label{CP-simplelim}
	\Lim_{\lambda \to 0} \E_{x_\lambda, y_\lambda}^\omega\pac{v_\lambda(\omega_{\phi(\lambda,t)}) - v_\lambda(\omega)} = 0.
\end{align}
Let us use \CRef{CP-lemmauniform} to show that this limit is uniform on $\intof{0}{T}$ for all $T \in \intoo{0}{1}$.
Fix $\omega \in \Omega$ and let us define, for all $\lambda \in \intof{0}{1}$, the function 
	$${f_\lambda : t \mapsto (1-\lambda)^{\phi(\lambda, t)-1}\E^\omega_{x_\lambda,y_\lambda}\pac{v_\lambda\pa{\omega_{\phi(\lambda, t)}} - v_\lambda(\omega)}},$$
	defined on $\intoo{0}{1}$. 
\CRef{CP-simplelim} implies that $f_\lambda(t) \ttu{\lambda \to 0} 0$ for all $t \in \intoo{0}{1}$. 
\\
Shapley's equations \cite{SH53} yield, for any $p \geq 1$:
	
	\begin{align} \label{CP-ShapEq}\tag{E\textsubscript{p}}
		v_\lambda(\omega) = \E^\omega_{x_\lambda, y_\lambda}\pac{\Sum{m=1}{p-1} \lambda(1-\lambda)^{m-1}g_m} + (1-\lambda)^{p-1} \E^\omega_{x_\lambda,y_\lambda}\pac{v_\lambda(\omega_p)}.
	\end{align}
Let $a$ and $b$ be two integers such that $1 \leq a<b$. Making the difference of \CRef{CP-ShapEq} with $p = b$ and \CRef{CP-ShapEq} with $p = a$ yields:
	
\begin{align} \label{CP-ShapEqab}
\begin{split} 
	&\abs{(1-\lambda)^{b-1}\E^\omega_{x_\lambda, y_\lambda}\pac{v_\lambda(\omega_{b})-v_\lambda(\omega)} 
	- (1-\lambda)^{a-1}\E^\omega_{x_\lambda, y_\lambda}\pac{v_\lambda(\omega_{a})-v_\lambda(\omega)}} \\
	&= \abs{-\E^\omega_{x_\lambda, y_\lambda}\pac{\Sum{m=a}{b-1} \lambda(1-\lambda)^{m-1} g_m} + \pa{(1-\lambda)^{a-1}  - (1-\lambda)^{b-1}} v_\lambda(\omega)} \\
	&\leq 2\norm{g}_\infty\pa{(1-\lambda)^{a-1}  - (1-\lambda)^{b-1}}.
\end{split}
\end{align}
\\
Let us consider $t,t' \in \intoo{0}{1}$ with $t < t'$. We recall that $\phi(\lambda, t) = \ceil{\f{\ln(1-t)}{\ln(1-\lambda)}}$. Using \CRef{CP-ShapEqab}:
	
\begin{align*}
\begin{split}
	\abs{f_\lambda(t') - f_\lambda(t)} 
	&\leq \f{2}{1-\lambda}\norm{g}_\infty \pa{(1-\lambda)^{\phi(\lambda,t)}  - (1-\lambda)^{\phi(\lambda,t')}} \\
	&\leq \pa{(1-\lambda)^{\f{\ln(1-t)}{\ln(1-\lambda)}}  - (1-\lambda)^{\f{\ln(1-t')}{\ln(1-\lambda)}+1}} \\
	&= \f{2}{1-\lambda}\norm{g}_\infty  \pac{(1-t) - (1-t')(1-\lambda)}\\
	&= \f{2}{1-\lambda}\norm{g}_\infty \pac{(t'-t) + \lambda(1-t')} \\
	&\leq \f{2}{1-\lambda} \norm{g}_\infty \pa{(t'-t) + \lambda}.
\end{split}
\end{align*}
For $\lambda \leq \f{1}{2}$, we hence have:
\begin{equation*}
\abs{f_\lambda(t') - f_\lambda(t)} \leq 4\norm{g}_\infty \pa{(t'-t) + \lambda} .
\end{equation*}
Using \CRef{CP-lemmauniform}, for all $\omega \in \Omega$, $f_{\lambda}$ converges uniformly on $\intoo{0}{1}$. It follows that for all $T \in \intoo{0}{1}$,  

\begin{equation*}
	\Sup_{t \in \intof{0}{T}} \E_{x_\lambda,y_\lambda}^\omega\pac{v_\lambda(\omega_{\phi(\lambda,t)}) - v_\lambda(\omega)} \ttu{\lambda \to 0} 0.
\end{equation*}
As $v_\lambda \ttu{\lambda \to 0} v^*$ uniformly, \CRef{CP-propunif} is proved.
\end{proof}

\subsection{A Sufficient Condition for the Constant Payoff Property}

We show an analogous result to the implication (\emph{2}. $\then$ \emph{1}.) in \CRef{CP-PropBruno} in the finite-horizon framework, as stated in the following proposition:

\begin{proposition} \label{CP-PropBrunoAsympt}
	Let $(\sigma_n, \rho_n)_n$ be a family of Markov strategies that is an asymptotically optimal profile. If for all $\omega \in \Omega$ and $t \in \intoo{0}{1}$, $$\E_{\sigma_n, \rho_n}^\omega\pa{v^*(\omega_{\ceil{tn}+1}) - v^*(\omega)} \ttu{n \to \pinf} 0,$$ then $(\sigma_n, \rho_n)_n$ satisfies the constant payoff property. 
\end{proposition}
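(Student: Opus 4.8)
The plan is to reproduce, in the finite-horizon setting, the implication (\emph{2}.~$\then$~\emph{1}.) of \CRef{CP-PropBruno}, with the discounted Shapley decomposition replaced by its arithmetic-mean counterpart. Concretely, I would split the normalized cumulative payoff over the first $\ceil{tn}$ stages as the full-horizon payoff minus the continuation payoff over the last $n-\ceil{tn}$ stages, identify each of the two pieces with a limit value, and then invoke the hypothesis to collapse the continuation value onto $v^*(\omega)$. The starting point is the identity
\[
\E^\omega_{\sigma_n,\rho_n}\pac{\f{1}{n} \Sum{m=1}{\ceil{tn}} g_m} = \E^\omega_{\sigma_n,\rho_n}\pac{\f{1}{n} \Sum{m=1}{n} g_m} - \E^\omega_{\sigma_n,\rho_n}\pac{\f{1}{n} \Sum{m=\ceil{tn}+1}{n} g_m}.
\]
The first, full-horizon term tends to $v^*(\omega)$: this is exactly the asymptotic optimality of $(\sigma_n,\rho_n)$, which for an adapted profile is \CRef{CP-MainProp}.

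The heart of the argument is the continuation term. Conditioning on $\omega_{\ceil{tn}+1}$, the last $n-\ceil{tn}$ stages form a subgame whose horizon grows to infinity for fixed $t<1$. Writing
\[
\f{1}{n}\Sum{m=\ceil{tn}+1}{n} g_m = \f{n-\ceil{tn}}{n}\cdot\f{1}{n-\ceil{tn}}\Sum{m=\ceil{tn}+1}{n} g_m
\]
and using $\f{n-\ceil{tn}}{n}\to 1-t$, it remains to show that the inner normalized subgame payoff converges to $v^*(\omega_{\ceil{tn}+1})$, uniformly over the finitely many possible continuation states. This is where the real work lies, and I expect it to be the main obstacle: the hypothesis controls only the value \emph{process} $v^*(\omega_m)$ along the play, so to convert realized payoffs into values one must know that the strategies — and, for this term, their \emph{tails} — are asymptotically optimal. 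For an adapted profile this is automatic, because the tail after stage $\ceil{tn}+1$ is again an adapted profile of the $(n-\ceil{tn})$-stage subgame: at stage $m$ the discount $1/(n-m+1)$ already records the number of stages remaining until the common terminal stage $n$. Thus \CRef{CP-MainProp} applies to the subgame and gives the required convergence, so that the continuation term tends to $(1-t)\,\E^\omega_{\sigma_n,\rho_n}\pac{v^*(\omega_{\ceil{tn}+1})}$.

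Finally I would feed in the hypothesis $\E^\omega_{\sigma_n,\rho_n}\pac{v^*(\omega_{\ceil{tn}+1})-v^*(\omega)}\ttu{n\to\pinf}0$, which turns the continuation term into $(1-t)v^*(\omega)$; the decomposition then yields $v^*(\omega)-(1-t)v^*(\omega)=t\,v^*(\omega)$, which is the constant payoff property. The one delicate point to police throughout is uniformity: the subgame convergence must hold at the rate dictated by the fixed proportion $1-t$, and the expectation over the random state $\omega_{\ceil{tn}+1}$ must be taken before passing to the limit. Both are harmless because $\Omega$ is finite, so pointwise convergence over states is automatically uniform, and all error terms (the $\frac{n-\ceil{tn}}{n}$ versus $1-t$ discrepancy and the $v_k\to v^*$ approximation) are $O(1/n)$ or $o(1)$ and survive the expectation.
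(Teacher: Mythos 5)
Your overall skeleton coincides with the paper's: split the $n$-stage payoff into the first $\ceil{tn}$ stages plus the renormalized tail, identify the full-horizon payoff with $v^*(\omega)$ via \CRef{CP-MainProp}, and use the hypothesis to replace $\E^\omega_{\sigma_n,\rho_n}\pac{v^*(\omega_{\ceil{tn}+1})}$ by $v^*(\omega)$. The gap is in your treatment of the tail term, which you yourself flag as the heart of the matter. You claim that the continuation of $(\sigma_n,\rho_n)$ after stage $\ceil{tn}$ ``is again an adapted profile of the $(n-\ceil{tn})$-stage subgame,'' so that \CRef{CP-MainProp} applies off the shelf. This is not literally true. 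An adapted profile for horizon $N=n-\ceil{tn}$ must use blocks of some size $b_N$ (with $b_N/N\to 0$) \emph{aligned with the start of the subgame}, playing $x_{1/(N-kb_N)}$ on the $k$-th block. The tail of $\sigma_n$ instead inherits the block structure of the original game: since $\ceil{tn}$ is generally not a multiple of $a_n$, its first block is truncated and all subsequent block boundaries are offset by $r=\ceil{tn}-a_n\floor{\ceil{tn}/a_n}$, so the discounts it uses are $1/(N+r-\ell a_n)$, not $1/(N-\ell b_N)$. Hence the tail is not an adapted profile in the paper's sense, and \CRef{CP-MainProp} cannot be invoked as a black box; you would have to re-run its proof for this shifted, truncated block structure. (The estimates do survive: within each block the discount is still the reciprocal of the number of stages remaining at the block's start, measured to the common terminal stage $n$, and block lengths are still at most $a_n$ with $a_n/N\to 0$; but carrying this out is exactly the work your one-sentence reduction skips.) A secondary consequence: even once repaired, your argument uses the full adapted structure, so it proves the proposition only for adapted profiles rather than for a general Markov family as stated.

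The paper closes this step by a different, more robust argument that avoids any structural claim about the tail. It shows $\E^\omega_{\sigma_n,\rho_n}\pac{\f{1}{n-\ceil{tn}}\Sum{m=1}{n-\ceil{tn}}g_{m+\ceil{tn}}}-\E^\omega_{\sigma_n,\rho_n}\pac{v_{n-\ceil{tn}}(\omega_{\ceil{tn}+1})}\ttu{n \to \pinf}0$ by contradiction: if the tail payoff fell short of the expected continuation value by more than $O(\eps_n)$, Player 1 could play $\sigma_n$ up to stage $\ceil{tn}$ and then switch to an optimal strategy of $\Gamma_{n-\ceil{tn}}$; because $\rho_n$ is Markov, this deviation guarantees the expected continuation value over the last stages, hence strictly more than $\gamma^\omega_n(\sigma_n,\rho_n)+2\eps_n$, contradicting the $\eps_n$-optimality of the pair (and symmetrically for Player 2 in the other direction). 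This uses only asymptotic optimality plus the Markov property, which is precisely why the paper needs no analogue of your subgame-adaptedness claim. Replacing your reduction by this deviation argument would make your proof both correct and closer to the stated generality.
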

\begin{remark}
Similar to the discounted case, we believe that the sufficient condition for the constant payoff property outlined in Proposition \ref{CP-PropBrunoAsympt} is, in fact, also necessary. However, we do not provide a formal proof of this assertion, as it is not required for establishing Theorem \ref{CP-MainTheorem}.
\end{remark}
%

\begin{proof}
By assumption, there exists a vanishing sequence of nonnegative numbers $(\eps_n)$ such that for all $n$, $\sigma_n$ and $\rho_n$ are $\varepsilon_n$-optimal in $\Gamma_n$. 
	Let $\omega \in \Omega$ and $n \geq 1$. Define $\delta_n:= \E^\omega_{\sigma_n, \rho_n}\pac{\f{1}{n} \Sum{m=1}{n} g_m} - v_n(\omega)$. Because the strategies $\sigma_n$ and $\rho_n$ are $\varepsilon_n$-optimal, we have $|\delta_n| \leq \eps_n \ttn 0$.

	Now, take $t \in \intoo{0}{1}$. 
	One has:
	
	\begin{align}
		\E^\omega_{\sigma_n, \rho_n}\pac{\f{1}{n} \Sum{m=1}{n} g_m} 
		&= \E^\omega_{\sigma_n, \rho_n}\pac{\f{1}{n} \Sum{m=1}{\ceil{tn}} g_m}  + 
		\E^\omega_{\sigma_n, \rho_n}\pac{\f{1}{n} \Sum{m=\ceil{tn}+1}{n} g_m} \nonumber 
		\\
		&= \E^\omega_{\sigma_n, \rho_n}\pac{\f{1}{n} \Sum{m=1}{\ceil{tn}} g_m} + 
		\f{n-\ceil{tn}}{n} \E^\omega_{\sigma_n, \rho_n}\pac{\f{1}{n-\ceil{tn}} \Sum{m=1}{n-\ceil{tn}} g_{m+\ceil{tn}}} \label{eq:gamman}.
	\end{align}

	As a result,
	
	\begin{align*} 
	\begin{split}
		\delta_n &= \pa{\E^\omega_{\sigma_n, \rho_n}\pac{\f{1}{n}\Sum{m=1}{\ceil{tn}} g_m} - tv_n(\omega)} \\
		&\quad + \f{n-\ceil{tn}}{n} \pa{\E^\omega_{\sigma_n, \rho_n}\pac{\f{1}{n-\ceil{tn}} \Sum{m=1}{n-\ceil{tn}} g_{m+\ceil{tn}}} - \E^\omega_{\sigma_n, \rho_n}\pac{v_{n-\ceil{tn}}(\omega_{\ceil{tn}+1})}} \\
		&\quad + \f{n-\ceil{tn}}{n} \pa{\E^\omega_{\sigma_n, \rho_n}\pac{v_{n-\ceil{tn}}(\omega_{\ceil{tn}+1})} - v_n(\omega)} \\
		&\quad + \pa{\f{n-\ceil{tn}}{n} - (1-t)}v_n(\omega).
	\end{split}
	\end{align*}
	Clearly, 
	
	$$
		\pa{\f{n-\ceil{tn}}{n} - (1-t)}v_n(\omega) \ttu{n \to \pinf} 0,
	$$
	and, using our hypothesis, 
	
	$$
		\E_{\sigma_n, \rho_n}^\omega\pa{v_{n-\ceil{tn}}(\omega_{\ceil{tn}+1}) - v_{n}(\omega)} \ttu{n \to \pinf} 0.
	$$
	To prove the proposition, it is thus enough to show that 
	\begin{equation} \label{CP-sufcond-eq:last}
	 \E^\omega_{\sigma_n, \rho_n}\pac{\f{1}{n-\ceil{tn}} \Sum{m=1}{n-\ceil{tn}} g_{m+\ceil{tn}}} - \E^\omega_{\sigma_n, \rho_n}\pac{v_{n-\ceil{tn}} (\omega_{\ceil{tn}+1})}  \ttu{n \to \pinf} 0.
	 \end{equation}
	 If this was not the case, there should be some $n$ such that either
	 \begin{equation*}
	 \frac{n-\ceil{tn}}{n} \left[ \E^\omega_{\sigma_n, \rho_n}\pac{\f{1}{n-\ceil{tn}} \Sum{m=1}{n-\ceil{tn}} g_{m+\ceil{tn}}} - \E^\omega_{\sigma_n, \rho_n}\pac{v_{n-\ceil{tn}} (\omega_{\ceil{tn}+1})} \right]
	  < -2\varepsilon_n,
	 \end{equation*}
	 or the reverse inequality with $\varepsilon_n$ instead of $-\varepsilon_n$. Assume the first inequality holds. Combining with \CRef{eq:gamman}, we get
	\begin{equation*}
	\gamma^{\omega}_n(\sigma_n,\rho_n)<
	\E^\omega_{\sigma_n, \rho_n}\pac{\f{1}{n} \Sum{m=1}{\ceil{tn}} g_m} + 
		\f{n-\ceil{tn}}{n} \E^\omega_{\sigma_n, \rho_n}\pac{v_{n-\ceil{tn}} (\omega_{\ceil{tn}+1})}+2\varepsilon_n.
	\end{equation*}

	 By playing $\sigma_n$ until stage $\ceil{tn}$, and then an optimal strategy in $\Gamma_{n-\ceil{tn}}$, Player 1 would thus get strictly more than
	 $\gamma^{\omega}_n(\sigma_n,\rho_n)+2 \varepsilon_n$. This contradicts the fact that $\sigma_n$ and $\rho_n$ are $\varepsilon_n$-optimal, which concludes the proof of the proposition. 
\end{proof}

\section{Proof of \CRef{CP-MainTheorem}} \label{sec:constant}
Let us recall \CRef{CP-MainTheorem}.
\begin{theorem*} 
There exists an adapted profile that satisfies the constant payoff property. 
\end{theorem*}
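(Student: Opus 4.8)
The plan is to exhibit one concrete adapted profile and verify the sufficient condition of \CRef{CP-PropBrunoAsympt}. I start from an arbitrary discounted optimal profile $(x_\lambda,y_\lambda)_\lambda$ (which exists by \cite{SH53}); the induced adapted profile $(\sigma_n,\rho_n)_n$ is automatically a family of Markov strategies and, by \CRef{CP-MainProp}, is asymptotically optimal \emph{whatever} the sequence $(a_n)$ is. Hence the only freedom left is the choice of $(a_n)$, and it suffices to establish, for every $\omega\in\Omega$ and $t\in\intoo{0}{1}$, the single limit
$$
\E^\omega_{\sigma_n,\rho_n}\pac{v^*(\omega_{\ceil{tn}+1})-v^*(\omega)}\ttu{n\to\pinf}0,
$$
after which \CRef{CP-PropBrunoAsympt} yields the constant payoff property.

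The key structural fact is that within the $k$-th block (stages $ka_n+1,\dots,(k+1)a_n$) the profile plays the stationary $\lambda_k^n$-discounted optimal pair $(x_{\lambda_k^n},y_{\lambda_k^n})$. Writing $V_m:=\E^\omega_{\sigma_n,\rho_n}[v^*(\omega_m)]$ and conditioning on $\omega_{ka_n+1}$, the Markov property turns the increment $V_{(k+1)a_n+1}-V_{ka_n+1}$ into an average over starting states $\omega'$ of $\E^{\omega'}_{x_{\lambda_k^n},y_{\lambda_k^n}}[v^*(\omega_{a_n+1})-v^*(\omega_1)]$. Since $a_n/n\to0$, the $a_n$ stages of the $\lambda_k^n$-discounted game correspond to a discounted time $t_k$ of order $a_n\lambda_k^n=a_n/(n-ka_n)$, which tends to $0$ uniformly over the blocks preceding stage $\ceil{tn}$; in particular there is $t_k\le 1/2$ with $\phi(\lambda_k^n,t_k)=a_n+1$ for $n$ large. \CRef{CP-propunif} then bounds each increment by $\eta(\lambda_k^n)$, where
$$
\eta(\lambda):=\max_{\omega\in\Omega}\ \sup_{s\in\intof{0}{1/2}}\abs{\E^\omega_{x_\lambda,y_\lambda}\pac{v^*(\omega_{\phi(\lambda,s)})-v^*(\omega)}}\ttu{\lambda\to0}0 .
$$
This is the rigorous form of the heuristic that the payoff is \emph{constant within blocks}.

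Telescoping over the $k^*\le tn/a_n$ full blocks together with the final partial block (on which the discount factor is the constant $\lambda_{k^*}^n$, so the same estimate applies) gives
$$
\abs{\E^\omega_{\sigma_n,\rho_n}\pac{v^*(\omega_{\ceil{tn}+1})-v^*(\omega)}}
\le (k^*+1)\max_{k\le k^*}\eta(\lambda_k^n)
\le \pa{\tfrac{2n}{a_n}+1}\,\hat\eta\!\pa{\tfrac{C_t}{n}},
$$
where $\hat\eta(\varepsilon):=\sup_{\lambda\le\varepsilon}\eta(\lambda)\to0$ and $C_t=2/(1-t)$ bounds $n\lambda_{k^*}^n$ (at most $\approx tn$ stages have elapsed, so the discount factors stay of order $1/n$). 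The main obstacle, and the reason the statement is an existence result rather than a property of \emph{every} adapted profile, is that the number of blocks $n/a_n$ diverges, so the naive sum of per-block errors need not vanish; note that the Lipschitz bound $\abs{f_\lambda(t')-f_\lambda(t)}\le 4\norm{g}_\infty((t'-t)+\lambda)$ from the proof of \CRef{CP-propunif} only gives a per-block contribution of order $t_k$, whose sum is $\approx-\ln(1-t)=\OO{1}$ and hence useless on its own. The resolution exploits the freedom in the definition of an adapted profile: since $\hat\eta(C/n)\to0$ for every fixed $C$, one may choose $(a_n)$ tending to infinity slowly, with $a_n/n\to0$ yet $a_n\gg n\,\hat\eta(n^{-1/2})$ (for instance $a_n=\ceil{n\max(\sqrt{\hat\eta(n^{-1/2})},\,n^{-1/4})}$), which forces $\tfrac{n}{a_n}\hat\eta(C_t/n)\to0$ for each fixed $t$. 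With this choice the displayed bound tends to $0$ for all $\omega$ and $t$, and \CRef{CP-PropBrunoAsympt} completes the proof.
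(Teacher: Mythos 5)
Your proof is correct and takes essentially the same approach as the paper: verify the sufficient condition of \CRef{CP-PropBrunoAsympt} by writing each block increment of $\E^\omega_{\sigma_n,\rho_n}\pac{v^*(\omega_m)}$ as a discounted-game expectation (Markov property plus stationarity within blocks), bound it via the uniform convergence of \CRef{CP-propunif}, telescope, and choose $(a_n)$ by a diagonal argument so that the number of blocks times the per-block error vanishes. The only difference is how $(a_n)$ is pinned down --- the paper sets $a_n=\min\Set{a\in\Intn{2}{n}}{1/a\le\mu_{\floor{n/a}}}$, where $\mu_p$ is a threshold making the per-block error at most $p^{-2}$ (hence total error $O(p_n^{-1})$, valid for all blocks via the cutoff $7/8$), whereas you encode the same modulus of convergence in $\hat\eta$ and give an explicit formula for $a_n$, restricting to blocks before stage $\ceil{tn}$ with cutoff $1/2$; these are equivalent implementations of the same balancing idea.
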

\begin{proof} ~


\StepInit
\Step{Definition of the strategy} Let $(x_\lambda, y_\lambda)_\lambda$ be a discounted optimal profile.

Using \CRef{CP-propunif}, for all $p \geq 1$, there exists $\mu \in \intoo{0}{1}$ such that:
\begin{equation} \label{eq:psquare}
	\forall \lambda \in \intof{0}{\mu}, \: \forall t \in \intof{0}{7/8}, \: \forall \omega \in \Omega, \quad 
	\abs{\E^{\omega}_{x_\lambda, y_\lambda}\pa{v^*(\omega_{\phi(\lambda,t)}) - v^*(\omega)}} \leq p^{-2}. 
\end{equation}
One can define a sequence $(\mu_p)_p$ in $\intof{0}{1/2}$ such that $\mu_p \ttu{p \to \pinf} 0$, and, for all $p \geq 1$, $\mu_p$ verifies \CRef{eq:psquare}.  
There exists $n_0 \in \N$ such that, for all $n \geq n_0$, the set $\Set{a \in \Intn{2}{n}}{1/a \leq \mu_{\lfloor n/a \rfloor}}$ is non-empty and thus has a minimum $a_n$. By definition, for all $n \geq n_0$,
\begin{equation} \label{eq:mupn}
	2 \leq a_n \leq n \qandq 1/a_n \leq \mu_{\lfloor n/a_n \rfloor}.
\end{equation}
To respect the definition of an adapted profile, one can complete the sequence by defining arbitrarily ${a_1, \dots, a_{n_0-1} \in \N}$. Since the property we are interested in is asymptotic, the exact values of $a_1, \dots, a_{n_0-1} \in \N$ are not relevant.

For each $\varepsilon>0$ and $n$ large enough, we have $1/(\varepsilon n) \leq \mu_{\lfloor n/(\varepsilon n) \rfloor}$, hence $a_n \leq \varepsilon n$. This shows that $a_n/n$ tends to $0$.

Let us define $(\sigma_n,\rho_n)_n$ as the adapted profile corresponding to the discounted optimal profile $(x_\lambda, y_\lambda)_\lambda$ and the sequence of positive integers $(a_n)_n$. More precisely, the strategy $\sigma_n$ (resp., $\rho_n$) plays $x_{\lambda_{k(m)}^n}$ (resp., $y_{\lambda_{k(m)}^n}$) at each stage $m \in \Intn{1}{n}$.

Let us show that it satisfies the constant payoff property. \\


\Step{A bound on the variation of the expected value within a block}
\begin{notation*}
	We define the following notations:
		$$
			\forall \omega \in \Omega, \quad \E^\omega_{n} := \E^\omega_{\sigma_n,\rho_n} \qandq \E^\omega_{\lambda} := \E^\omega_{x_{\lambda}, y_{\lambda}}.
		$$	
\end{notation*}

Let $n \geq n_0$, $k \in \Intn{0}{p_n - 1}$, $j \in \Intn{1}{a_n+1}$, and $\omega \in \Omega$. Let us show that
\begin{equation} \label{eq:var}
| \m{E}^\omega_{n} \left[v^*(\omega_{ka_n+j}) - v^*(\omega_{k a_n+1}) \right]| \leq p_n^{-2}.
\end{equation}
Since
\begin{eqnarray*}
\E^\omega_{n} \left[v^*(\omega_{k a_n+j}) - v^*(\omega_{k a_n+1})\right]
&=&  
\m{E}^{\omega}_{n} \left[\E^{\omega_{k a_n+1}}_{\lambda^n_k}\pa{v^*(\omega_{j}) - v^*(\omega_{1})} \right],
\end{eqnarray*}
it is sufficient to show that for all $\omega' \in \Omega$, all $j' \in \Intn{1}{a_n+1}$, 
\begin{equation} \label{eq:var2}
	\abs{\m{E}^{\omega'}_{\lambda^n_k} \pac{v^*(\omega_{j'})} - v^*(\omega')} \leq p_n^{-2}.
\end{equation}
Let $j' \in \Intn{1}{a_n+1}$ and $\omega' \in \Omega$. The function $\fun{\intoo{0}{1}}{\N \cup \{0\}}{t}{\phi(\lambda^n_k, t)}$ is surjective.  Moreover, by definition of $\lambda^n_k$, we have 
\begin{equation} \label{lambdaineq1}
	\lambda^n_k \leq \f{1}{a_n}.
\end{equation}
Using \CRef{lambdaineq1}, one has:
\begin{align*}
	\Sum{m=1}{j'} \lambda^n_k(1-\lambda^n_k)^{m-1}&= 1-\left(1-\lambda^n_k \right)^{j'}
	\leq 1-\left(1-\frac{1}{a_n}\right)^{a_n+1}.
\end{align*}
As the function $\lambda \mapsto 1 - \pa{1-\lambda}^{\f{1}{\lambda}+1}$ is increasing on $\intof{0}{1/2}$ and $a_n \geq 2$, one has:
\begin{align*}
	\Sum{m=1}{j'} \lambda^n_k(1-\lambda^n_k)^{m-1} \leq \f{7}{8}.
\end{align*}
As a result, there exists $t \in \intof{0}{7/8}$ such that $\phi(\lambda^n_k,t) = j'$.
\CRef{eq:mupn} and \CRef{lambdaineq1} yield $\lambda^n_k \leq \mu_{p_n}$, and, combined with \CRef{eq:psquare}, we obtain
\begin{eqnarray*}
	\abs{\m{E}^{\omega'}_{\lambda^n_k} \left[v^*(\omega_{j'})\right] - v^*(\omega')} =
	\abs{\m{E}^{\omega'}_{\lambda^n_k} \left[v^*(\omega_{\phi(\lambda^n_k,t)})\right] - v^*(\omega')}
\leq p_n^{-2}.
\end{eqnarray*}
We deduce that \CRef{eq:var2} holds, hence \CRef{eq:var} holds. 
\\

\Step{A general bound on the variation of the expected value}
\\
Let $n \geq n_0$, $m \in \Intn{1}{p_n \cdot a_n}$ and $\omega \in \Omega$. Let us show that
\begin{equation} \label{eq:omegam}
	\abs{\m{E}^\omega_{n}\left[v^*(\omega_{m})\right] - v^*(\omega)}
\leq 2p_n^{-1}.
\end{equation}
Indeed, let $k' \in \Intn{0}{p_n-1}$ such that $k' a_n+1 \leq m \leq (k'+1) a_n$. We have
\begin{eqnarray*}
	\abs{\m{E}^\omega_{n}\left[v^*(\omega_{m})\right] - v^*(\omega)} 
	&\leq&
	\sum_{\ell=0}^{k'-1} \abs{\m{E}^\omega_{n}\left[v^*(\omega_{(\ell+1) a_n+1})\right] - v^*(\omega_{\ell a_n+1})} 
	+ \abs{\m{E}^\omega_{n}\left[v^*(\omega_{m})\right] - v^*(\omega_{k' a_n+1})} 
\end{eqnarray*}
Applying \CRef{eq:var} to $k = \ell$ and $j = a_n +1$, we obtain
\begin{equation*}
	\abs{\m{E}^\omega_{n}\left[v^*(\omega_{(\ell+1) a_n+1})\right] - v^*(\omega_{\ell a_n+1})} \leq p_n^{-2}.
\end{equation*}
Let $j = m-k'a_n$. Because $j \leq a_n$ and $k' \leq p_n - 1$, applying \CRef{eq:var} yields:
\begin{equation*}
	\abs{\m{E}^\omega_{n}\left[v^*(\omega_{m})\right] - v^*(\omega_{k' a_n+1})} \leq p_n^{-2}.
\end{equation*}
As a result, 
\begin{equation*}
	\abs{\m{E}^\omega_{n}\left[v^*(\omega_{m})\right] - v^*(\omega)} \leq 2p_n^{-1}.
\end{equation*}

\Step{Proof of the constant payoff property}
\\
Let $t \in \intoo{0}{1}$. Because $a_n/n$ tends to 0, for $n \geq n_0$ large enough, we have $a_n  \leq n-\ceil{tn}$, hence $p_n a_n \geq n-a_n \geq \ceil{tn}$. Using \CRef{eq:omegam}, we get
$$|\E_{\sigma_n, \rho_n}^\omega\pa{v^*(\omega_{\ceil{tn}+1}) - v^*(\omega)}| \leq 2 p_n^{-1}.$$ 
This implies that 
$|\E_{\sigma_n, \rho_n}^\omega\pa{v^*(\omega_{\ceil{tn}+1}) - v^*(\omega)}| \ttn 0$. 
Finally, \CRef{CP-PropBrunoAsympt} proves the constant payoff property.
\end{proof}

\section{Perspectives} \label{sec:perspectives}
In the single-player case, the constant payoff property remains valid even when the state space and action sets are infinite \cite{SVV10}, provided that $(v_n)$ converges uniformly. A natural extension of our result would be to address two-player stochastic games with infinite action sets, infinite state space and/or imperfect observation of the state, for which $(v_n)$ converges uniformly. Examples of such classes can be found in \cite{SZ16,ziliotto2021,LS15}. To date, the only positive result in this direction pertains to discounted absorbing games with compact action sets \cite{SV20}, and for a particular class of stochastic games with compact state space called \textit{splitting game} \cite{OB18}. 
\\
Another direction is to investigate whether the constant payoff property holds for \textit{any} asymptotically optimal profile. 
\section*{Acknowledgments}
This work was supported by the French Agence Nationale de la Recherche (ANR) under grant ANR-21-CE40-0020 (CONVERGENCE project), and under grant ANR-17-EURE-0010 (Investissements d'Avenir program).
Part of this work was done during a 1-year visit of Bruno Ziliotto to the Center for Mathematical Modeling (CMM) at University of Chile in 2023, under the IRL program of CNRS. The authors are grateful to Rida Laraki and Guillaume Vigeral for fruitful discussions. 

\bibliography{bibliogen.bib}
\end{document}